\DeclareMathOperator{\rank}{rank}
\DeclareMathOperator{\End}{End}
\DeclareMathOperator{\GL}{GL}
\DeclareMathOperator{\id}{id}
\DeclareMathOperator{\vol}{vol}
\DeclareMathOperator{\Pfaff}{Pfaff}
\DeclareMathOperator{\Isom}{Isom}
\DeclareMathOperator{\Lie}{Lie}
\DeclareMathOperator{\argmin}{argmin}
\DeclareMathOperator{\Ad}{Ad}
\DeclareMathOperator{\U}{U}
\DeclareMathOperator{\Sp}{Sp}
\DeclareMathOperator{\Li}{Li}
\newtheorem{defi}{Definition}
\newtheorem{ex}{Example}
\newtheorem{rem}{Remark}
\newtheorem{lem}{Lemma}
\newtheorem{prop}{Proposition}
\begin{document}


%

%

%
\ifCLASSINFOpdf
\else
\fi
\hyphenation{op-tical net-works semi-conduc-tor}

%
\title{Gaussian distributions on Riemannian symmetric spaces, random matrices, and planar Feynman diagrams}
%
%
%

\author{Simon~Heuveline,
        Salem Said, 
        and Cyrus Mostajeran
\thanks{S. Heuveline is with the Centre for Mathematical Sciences of the University of Cambridge}
\thanks{S. Said is with the University of Bordeaux}
\thanks{C. Mostajeran is with the Department of Engineering of the University of Cambridge}}
\maketitle

\begin{abstract}
 Gaussian distributions can be generalized from Euclidean space to a wide class of Riemannian manifolds. Gaussian distributions on manifolds are harder to make use of in applications since the normalisation factors, which we will refer to as partition functions, are complicated, intractable integrals in general that depend in a highly non-linear way on the mean of the given distribution. Nonetheless, on Riemannian symmetric spaces, the partition functions are independent of the mean and reduce to integrals over finite dimensional vector spaces. These are generally still hard to compute numerically when the dimension (more precisely the rank $N$) of the underlying symmetric space gets large. 
On the space of positive definite Hermitian matrices, it is possible to compute these integrals exactly using methods from random matrix theory and the so-called Stieltjes-Wigert polynomials. In other cases of interest to applications, such as the space of symmetric positive definite (SPD) matrices or the Siegel domain (related to block-Toeplitz covariance matrices), these methods seem not to work quite as well. Nonetheless, it remains possible to compute leading order terms in a large $N$ limit, which provide increasingly accurate approximations as $N$ grows. This limit is inspired by realizing a given partition function as the partition function of a zero-dimensional quantum field theory or even Chern-Simons theory. From this point of view the large $N$ limit arises naturally and saddle-point methods, Feynman diagrams, and certain universalities that relate different spaces emerge.
\end{abstract}

\begin{IEEEkeywords}

Riemannian symmetric spaces, Gaussian distributions, random matrix theory, SPD matrices, partition functions, high-dimensional data
\end{IEEEkeywords}

%
\IEEEpeerreviewmaketitle

\section{Introduction}

We begin by briefly motivating the use of symmetric spaces in applications within information theory through two main examples used in this paper: the space of symmetric positive definite (SPD) matrices and the Siegel domain. This will then lead to a presentation of our theory of Gaussian distributions on symmetric spaces and the main results of our paper.

\subsection{Motivation}
\label{subsec:Moti} 

The Riemannian symmetric space of most interest to many current applications is the space of symmetric positive definite (SPD) $N\times N$ matrices $\mathcal{P}_N$, which admits the quotient representation $\GL(N, \mathbb{R})/\textrm{O}(N)$ from the viewpoint of Lie theory. Due to the enormous range of applications that generate data within the SPD manifold $\mathcal{P}_N$, there has been intense and growing research interest in the development of techniques for the analysis, representation, and classification of such data with applications in medical imaging \cite{BASSER1994,Bihan2001,Batchelor2005,Pennec2006}, shape analysis \cite{Fillard2007}, radar signal processing \cite{Arnaudon2011,Arnaudon2013},  computer vision \cite{Tuzel2006,Tuzel2008,Pang2008,Caseiro2011,Jay2013,Guo2013,Harandi2016}, and continuum mechanics \cite{Moakher2006}. In more recent years, the interest in probabilistic models on manifolds driven by applications in geometric learning has led to the development of theories for the construction of probability distributions such as Gaussians on manifolds including $\mathcal{P}_N$ \cite{PennecGauss2006,Cheng2013,Sa16,Sa17,HSM21}.

A sample application can be found in the context of electroencephalogram (EEG) based brain-computer interfaces (BCI) \cite{Ba2012,Zanini2018}, where data measurements taken from $N$ electrodes at $T$ time points generate a matrix $X\in \mathbb{R}^{N \times T}$. To average over time, one views the empirical covariance matrix $\frac{1}{T-1}X^TX\in \mathcal{P}(N)$ as signal descriptor. After assuming that many points in $\mathcal{P}(N)$ arising in this way (i.e. many different sessions with the same subject) follow a Gaussian distribution on $\mathcal{P}(N)$ with mean $\bar{x}$ and variance $\sigma^2$, parameter estimation of $\bar{x}$ and $\sigma^2$ becomes a natural question.
From Definition \ref{defi:partition}, it will become clear that for given data points $\{x_i\in \mathcal{P}(N):i\in \{1, \dots, n\}\}$, the log-likelihood function of a Gaussian distribution looks like \begin{align}
\label{eq:mle}
    L(\bar{x}, \sigma)=-n \log(Z_{\mathcal{P}(N)}(\sigma))- \frac{1}{2\sigma^2} \sum \limits_{k=1}^n d^2_g(\bar{x}, x_k),
\end{align}
where $d_{g}$ is a Riemannian distance function on $\mathcal{P}(N)$ and $Z_{\mathcal{P}(N)}(\sigma)$ is the normalisation constant of the Gaussian distribution and the subject of much of this work.
Hence, finding the maximum likelihoood estimate (MLE) of $\bar{x}$ can be done by computing the \textbf{Riemannian barycentre}, $ \underset{x \in M}{\argmin}\sum \limits_{k=1}^n d^2_g(x, x_k)$, a task for which there exist an increasing number of high-performance routines.
The harder problem of finding the MLE of $\sigma$ on $\mathcal{P}(N)$ is then solved by computing $Z_1(\sigma)\coloneqq Z_{\mathcal{P}(N)}(\sigma)$, the \textbf{partition function} (up to a $\sigma$-independent multiplicative constant) which then enables us to compute the saddle point of equation \eqref{eq:mle}.

Another example concerns the space $\mathcal{T}_n^N$ of $Nn\times Nn$ Block-Toeplitz covariance matrices consisting of $n^2$ many $N\times N$ Toeplitz blocks (as defined in \cite{Sa17}, IV. C.). $\mathcal{T}_n^N$ plays an important role in multi-channel and two-dimensional linear prediction and filtering problems \cite{Therrien1981,Jakobsson2000}. This space is reducible \begin{align}
    \mathcal{T}_n^N\cong\mathcal{T}_n^1 \times \underbrace{\mathbb{D}(N) \times\dots \times \mathbb{D}(N)}_{n \textrm{ times}}
\end{align}
with factors given by the \textbf{Siegel domain} $\mathbb{D}(N) \cong \Sp(2N, \mathbb{R})/\U(N)$.
As partition functions factorize nicely for such products (see \cite{Sa17}, (20)-(22)) and $Z_{\mathcal{T}_1^N}$ has been computed exactly in \cite{Sa17} (section IV. B.), the only
remaining thing to do is to compute $Z_S \coloneqq Z_{\mathbb{D}(N)}$. $\mathbb{D}(N)$ itself has interesting applications and  provides a geometric setting which leads to deep results with regard to linear filtering and linear control theory \cite{Bougerol1993}.

In \cite{Sa16} (resp. \cite{Sa17}) $Z_1(\sigma)$ (resp. $Z_S(\sigma)$) have been computed numerically for $N \approx 20$ using specifically designed Monte Carlo methods. Such numerical methods remain effective only up to around $N\approx 40$, even though cases involving $N > 250$ are of relevance to applications such as EEG based brain-computer interfaces \cite{Ba2012,Se17}. In general, any application involving covariance descriptors generated by measurements from a large number of sensors will present similar computational challenges. This motivates us to search for leading order terms in the so-called \textbf{large $N$ limit} where $N\rightarrow \infty$ while $t=N\sigma^2$ is fixed.

\subsection{Outline}

In this work, we will not be able to give closed exact formulas for the desired partition functions $Z_1$ and $Z_S$ (without computing certain skew-orthogonal polynomials numerically) but we will try to get as close as possible. In terms of direct usefulness to the applications of Section \ref{subsec:Moti} (which will be elaborated on in future work) our most important contributions are the following.
\begin{enumerate}
\item
Results on how to compute $Z_1$ and $Z_S$ numerically for finite $N$ which can be quite large (but not too large). Here, we still have to compute $N$ so-called skew-orthogonal polynomials numerically.
\item Two inequivalent explicit formulas for the large $N$ limit of $Z_1$ with different interpretations.
\item An integral equation whose solution completely describes the large $N$ limit of $Z_S$ and partition functions of other spaces.
\end{enumerate}
On our way to these results, we will learn much more about several other symmetric spaces and we will develop many different perspectives on the partition functions.

In Section \ref{sec:generalGauss}, we set up the general framework of Gaussian distributions on Riemannian symmetric spaces and note the significance of symmetric spaces as manifolds on which the partition functions become tractable. We will also introduce relevant examples, which are mostly non-compact spaces.
In Section \ref{sec:finite_N}, using techniques from random matrix theory,  we will give an exact formula for $Z_2(\sigma)$, the partition function of the space of positive definite Hermitian $N\times N$ matrices $\mathcal{P}_{\mathbb{C}}(N)=\GL(N,\mathbb{C})/U(N)$. The existence of such a formula is very specific to the case $\mathcal{P}_{\mathbb{C}}(N)$, as certain relevant orthogonal polynomials, the Stieltjes-Wiegert polynomials, are well known explicitly in this case. Such formulas for the partition functions of interest, $Z_1$ and $Z_S$, do exist in theory, but still depend on coefficients of so-called skew-orthogonal polynomials, which are not known explicitly to us. This will be discussed anyway, as the skew-orthogonal polynomials may be computed numerically (for $N$ not too large), by using a symplectic Gram-Schmidt algorithm. The computation of $Z_2$ using random matrix theory has been previously carried out in the physics literature \cite{Ma05,Ti04} and the relevance of skew-orthogonal polynomials relating to $\GL(N, \mathbb{R})$ has been observed independently in \cite{Ti20}. Many further results were also found independently in the wonderful paper \cite{Ti20} from a slightly different perspective, on which we will try to comment throughout this work. In any case, we will try to be as complementary to \cite{Ti20} as possible. Another recent beautiful paper on the topic is \cite{Fo20}, which considers the case $\mathcal{P}_{\mathbb{C}}(N)$ in much greater detail.

Section \ref{sec:large_N} is at the heart of this paper and discusses the limit in which any known numerical methods will start to break down: The large $N$ limit $N \rightarrow \infty$ (while keeping $t=N\sigma^2$ fixed). First, we will use the results from section \ref{sec:finite_N} to compute the large $N$ limit of $Z_2$ from elementary methods.
Then, we will develop a more powerful description of the large $N$ limit: The saddle-point equation. This part makes use of many insights from the physics literature and we will make a physical interpretation in terms of planar Feynman diagrams quite explicit. In addition to qualitative interpretations, we will derive an integral equation out of this, whose solution, the master field, fully characterizes the large $N$ limit. This equation for $Z_1$, $Z_2$ (and $Z_4$, the partition function of the space of quaternionic hermitian matrices $\GL(N,\mathbb{H})/\Sp(N)$) is related by a scaling, so that their respective large $N$ limit is essentially the same, a phenomenon we will refer to as \textbf{universality}.
This allows us to relate the large $N$ limit of $Z_2$ from elementary calculation to that of $Z_1$. Moreover, the solution to the saddle-point equation is given explicitly, which gives another formula for the large $N$ limit. In the case of $Z_S$, we do not know of an explicit solution to the saddle-point equation, but we will write down the equation anyway, which might be solved either numerically or analytically in future work. Moreover, we observe another universality relating the space $\textrm{SO}(2N)/\textrm{O}(N)$ (resp. its non-compact dual) listed as DIII, that we learned about from \cite{Ti20} (Appendix C) to the Siegel domain.

In Section \ref{sec:duality}, we discuss a further direction of enquiry, based on the idea that partition functions of dual pairs of non-compact and compact spaces should be related. This is rigorously demonstrated for $Z_2$, with the dual pair $M = \mathcal{P}_{\mathbb{C}}(N)$ and $M^\vee = \mathrm{U}(N)$. The conclusion in Section \ref{sec:Outlook} provides an outlook for future research directions.

\section{Gaussian Distributions on Riemannian manifolds and symmetric spaces}
\label{sec:generalGauss}

In this section we will define the objects of interest, \textbf{partition functions}, which in this context are normalisation factors of Gaussian distributions on manifolds. On a Riemannian symmetric space $M=G/H$, the partition function simplifies to an integral over a $\rank(G/H)$-dimensional vector space, using generalized polar coordinates. We will briefly review enough basics about the structure theory of Riemannian symmetric spaces to understand this step. As we will focus on specific examples in later sections, we will not go into many important details of the general theory.
A more detailed review, which is enough for our purposes, can be found in \cite{Sa16,Sa17} and much more on the general theory can be found in the standard reference \cite{Hel79}.

\subsection{Gaussian distributions on general Riemannian manifolds and symmetric spaces}
Any Riemannian manifold $M$ with metric tensor $g$ is naturally equipped with a Riemannian distance function $d_g: M \times M \rightarrow \mathbb{R}_{\geq 0}$, which is a metric function on $M$ in the sense of metric spaces. This allows us to define a Gaussian distribution on $(M,g)$ in the following way:

\begin{defi}
\label{defi:partition}
Let $(M,g)$ be a Riemannian manifold on which \begin{equation}
\label{eq:partfct}
    Z(\bar{x}, \sigma)= \int_M \exp \big(-\frac{d_g^2(x,\bar{x})}{2\sigma^2} \big) d \vol_g(x)< \infty
\end{equation}
for any $\bar{x} \in M$, $\sigma \in \mathbb{R}_{>0}$. We define the \textbf{Gaussian distribution} on $M$ with \textbf{mean} $\bar{x}$ and \textbf{standard deviation} $\sigma$ to be given by the density function
\begin{equation} \label{eq:gaussdensity}
    p(x|\bar{x}, \sigma)=\frac{1}{Z(\bar{x},\sigma)} \exp \big(-\frac{d_g^2(x,\bar{x})}{2\sigma^2} \big)
\end{equation}
with respect to the Riemannian volume measure $d \vol_g$.
We call $Z$ the \textbf{partition function} of the distribution.
\end{defi}

\begin{rem}
Gaussian distributions on Euclidean spaces have a number of defining properties, each of which leads to the same distribution. Examples of these characteristic properties include the observation from kinetic theory by Einstein that the position of a particle undergoing Brownian motion follows a Gaussian distribution, or the celebrated result from information theory by Shannon that among all probability distributions with a given mean and variance, the distribution with maximum entropy is Gaussian. Interestingly, there is no guarantee that these different defining properties will lead to the same definition of a Gaussian distribution in non-Euclidean spaces. For instance, the heat kernel of a Riemannian manifold may not lead to a definition of a Gaussian with the familiar and desirable statistical properties found in Euclidean spaces. The most attractive feature of Definition \ref{defi:partition} for a Riemannian Gaussian is that it ensures that maximum likelihood estimation is equivalent to the Riemannian barycentre problem \cite{Sa16,Sa17}, which is a fundamental and important statistical property of Gaussians on Euclidean domains. 
\end{rem}

On a general Riemannian manifold, $Z$ will depend in a highly non-linear and intractable way on the mean $\bar{x}$ as the dominant contribution to Equation (\ref{eq:partfct}) will come from local data such as the curvature at $\bar{x}$. This complex dependence on local data is also observed in the heat kernel approach to the Atiyah-Singer Index theorem (see chapters 2 and 4 of \cite{BGV13}), for instance. Hence, to have a chance of actually computing the full partition function analytically, we should restrict to spaces that look the same locally at any point. Riemannian symmetric spaces formalize this intuition and we are fortunate that precisely such spaces appear in most applications of interest (see \cite{Sa16,Sa17}).

\begin{defi}
\label{defi:RSS}
A \textbf{Riemannian symmetric space} is a Riemannian manifold $(M,g)$ such that for each point $p \in M$ there is a global isometry $s_p:M \rightarrow M$ such that $s_p(p)=p$ and $d _p s_p=-\id_{T_pM}$, where $d _p s_p$ denotes the differential of $s_p$ at $p$.
\end{defi}
This definition guarantees the existence of many isometries on $M$, ensuring that $M$ is highly symmetric. In the theory of symmetric spaces the statement that "every point looks the same locally" can be formalized by a theorem that in any (locally) symmetric space the curvature tensor is parallel~\cite{Hel79}. Another characterization that will be crucial for us is the following Lie-theoretic perspective:

\begin{prop}
\label{thm:Lie_perspective}
For a symmetric space $M$, the connected component $G$ of the Isometry group of $M$ has a Lie group structure, such that it acts transitvely on $M$. Let $H$ be the stabilizer group of a point $p\in M$ under the action of $G$, then $H$ is a compact subgroup of $G$, and $M\cong G/H$.
\end{prop}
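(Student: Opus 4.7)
The plan is to proceed in three main steps: establish the Lie group structure on the full isometry group and hence on its identity component $G$; prove that $G$ acts transitively on $M$ using the geodesic symmetries $s_p$; and finally show that the stabilizer $H$ is compact, so that the orbit--stabilizer theorem identifies $M$ with $G/H$ as a smooth manifold.

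First I invoke the Myers--Steenrod theorem, which states that the full isometry group $\Isom(M)$ of any connected Riemannian manifold carries a Lie group structure with respect to which the action $\Isom(M)\times M \to M$ is smooth. The connected component of the identity $G \subset \Isom(M)$ is then an open subgroup, hence itself a Lie group. This step uses nothing about the symmetric structure beyond the existence of the Riemannian metric.

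For transitivity, the key input is geodesic completeness of $M$. Given a maximal geodesic $\gamma:[0,a)\to M$, reflecting near the endpoint by some $s_{\gamma(b)}$ with $b$ close to $a$ produces an extension past $a$, forcing $a=\infty$; so $M$ is complete, and by Hopf--Rinow any two points $p,q\in M$ are joined by a geodesic $\gamma:[0,1]\to M$ with $\gamma(0)=p$, $\gamma(1)=q$. I then consider the one-parameter family of \emph{transvections} $\tau_t := s_{\gamma(t/2)}\circ s_p$, which are isometries satisfying $\tau_0=\id$ and $\tau_t(p)=\gamma(t)$ (using that $s_{\gamma(t/2)}$ reverses $\gamma$ about the midpoint). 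By smoothness of the action, $t\mapsto \tau_t$ is continuous in $\Isom(M)$, so $\tau_1\in G$ and $\tau_1(p)=q$, establishing transitivity.

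For the stabilizer $H=\{g\in G:g(p)=p\}$, it is closed as the preimage of $\{p\}$ under the continuous orbit map, hence a Lie subgroup. Differentiation at $p$ gives the isotropy representation $\rho:H \to \mathrm{O}(T_pM)$, which is injective because any isometry fixing $p$ with identity differential at $p$ must agree with $\id$ on a normal neighborhood via the exponential map, and then globally by connectedness. Thus $H$ embeds as a closed subgroup of the compact group $\mathrm{O}(T_pM)\cong \mathrm{O}(n)$ and is itself compact. The diffeomorphism $G/H \cong M$ then follows from the standard theorem on quotients of Lie group actions with closed stabilizer.

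The main obstacle is really the invocation of Myers--Steenrod, a deep result that supplies both the Lie group topology on $\Isom(M)$ and the smoothness of the action; once this is available, the rest of the argument is a clean application of the geodesic symmetries $s_p$. A secondary subtle point is ensuring the continuity of the transvection path $t\mapsto \tau_t$ so that $\tau_1$ lies in the identity component $G$, and this too rests on the smoothness of the action provided by Myers--Steenrod.
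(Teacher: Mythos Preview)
The paper does not actually prove this proposition: immediately after the statement it remarks that ``we have dropped many important details'' and simply refers the reader to the standard sources \cite{Eb97,Hel79}. So there is no proof in the paper to compare against; your write-up is in fact supplying what the paper omits, and it follows the classical line of argument found in those references (Myers--Steenrod for the Lie structure, transvections $s_{\gamma(t/2)}\circ s_p$ for transitivity, the isotropy representation for compactness of $H$).

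One point deserves tightening. You assert that the injective isotropy representation $\rho:H\to \mathrm{O}(T_pM)$ realises $H$ as a \emph{closed} subgroup of $\mathrm{O}(T_pM)$, and conclude compactness from that. Injectivity alone does not give closedness of the image (think of an irrational line on a torus), so this step needs justification. The cleanest fix is to invoke a further consequence of Myers--Steenrod: the isometry group acts \emph{properly} on $M$, and proper actions have compact stabilisers. Alternatively, one can argue directly that in the compact-open topology on $\Isom(M)$ a sequence $h_n\in H$ has $h_n(p)=p$ and $d_ph_n\in \mathrm{O}(T_pM)$ bounded, so by an Arzel\`a--Ascoli argument for isometries a subsequence converges to an isometry $h$ with $h(p)=p$; this shows $H$ is compact without reference to the image of $\rho$. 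Either route closes the gap; the rest of your argument is sound.
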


In this proposition in particular, we have dropped many important details that will not play a role for the concrete examples in later sections (see \cite{Eb97,Hel79}).

\begin{rem} \label{rem:z_invariance}
In the language of Proposition \ref{thm:Lie_perspective}, it can be shown that the partition function on a symmetric space does not depend on $\bar{x}$ (see \cite{Sa17}, Proposition 1), which was our original motivation. In particular, since $G$ acts transitively on $M$, let $\bar{x}_1=g\bar{x}_2$ where $\bar{x}_1,\bar{x}_2 \in M$, $g\in G$ and observe that
\begin{align*}
Z(\bar{x}_1, \sigma)= \int_M p(x|\bar{x}_1) d \vol_g=\int_M p(x|g\bar{x}_2) d \vol_g=\int_M p(g^{-1}x|\bar{x}_2) d \vol_g=\int_M p(x|\bar{x}_2) d \vol_g=Z(\bar{x}_2, \sigma),
\end{align*}
where we have used the invariance of the measure $d\vol_g$ under $G$.
\end{rem}

The perspective of Proposition \ref{thm:Lie_perspective} is usually introduced to classify symmetric spaces by making use of the classification of (semi-)simple Lie groups. We will briefly touch on such a classification and mention which classes of symmetric spaces will be considered here. A more systematic approach starting from classification of symmetric spaces is due in future work.

For our purposes, the perspective of Proposition \ref{thm:Lie_perspective} is also important as it drastically simplifies the partition functions on $M=G/H$ through the following proposition. See Section II and Appendix A of \cite{Sa17} as well as references therein for a discussion. For any symmetric space $M$, the Lie algebra $\mathfrak{g}$ of its isometry group $G=\Isom_0(M)$ has a natural involution (induced from the global symmetries $s_p$ of Definition \ref{defi:RSS}), which introduces a \textbf{Cartan decomposition} $\mathfrak{g}= \mathfrak{h} \oplus \mathfrak{p}$ \cite{Hel79}. In the following proposition, we will integrate over a maximal abelian subspace of $\mathfrak{p}$, which we will call a \textbf{Cartan subspace}.

\begin{prop}
\label{thm:int_sym}
Given an integrable function on a non-compact symmetric space $(M=G/H,g)$, we can integrate in the following way: 
\begin{align}
    \int_{M}f(x)d\vol_g(x)= C \int_H \int_{\mathfrak{a}}f(a,h)D(a)da dh
\end{align}
where $dh$ is the normalised $H$-invariant measure on $H/H_{\mathfrak{a}}$, with $H_{\mathfrak{a}}$ the centralizer of $\mathfrak{a}$ in $H$, and $da$ the Lebesgue measure on a Cartan subspace $\mathfrak{a}\subset \mathfrak{g}=\Lie(G)$. The function $D:\mathfrak{a}\rightarrow\mathbb{R}^+$ is given by the following product over roots $\lambda: \mathfrak{a}\rightarrow\mathbb{R}$ with respective multiplicities $m_{\lambda}$, $D(a)=\prod_{\lambda>0} \sinh^{m_\lambda}(|\lambda(a)|)$.
\end{prop}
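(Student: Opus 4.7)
The plan is to establish a generalized polar decomposition of $M$ and then compute the Jacobian of the associated change of variables. Concretely, I would split the proof into three steps: a decomposition statement, a Jacobian computation, and a symmetrization step that absorbs the Weyl group and explains the absolute values in the statement.

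First, I would invoke the polar decomposition for non-compact Riemannian symmetric spaces: fixing a basepoint $o \in M$ with stabilizer $H$ and a Cartan subspace $\mathfrak{a} \subset \mathfrak{p}$, every $x \in M$ can be written as $x = h \cdot \exp(a) \cdot o$ with $h \in H$ and $a$ in a closed Weyl chamber $\mathfrak{a}_+ \subset \mathfrak{a}$, and this representation is unique on a dense open subset once $h$ is taken modulo the centralizer $H_\mathfrak{a}$. Thus the map
\begin{equation*}
\phi : (H/H_\mathfrak{a}) \times \mathfrak{a}_+ \longrightarrow M, \qquad (h,a)\mapsto h\exp(a)\cdot o,
\end{equation*}
is a diffeomorphism onto the complement of a set of measure zero, which reduces the integration to $(H/H_\mathfrak{a}) \times \mathfrak{a}_+$.

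Second, I would compute the pullback $\phi^*\,d\vol_g$. Using the root space decomposition of $\mathfrak{g}$ with respect to $\ad(\mathfrak{a})$,
\begin{equation*}
\mathfrak{g}=\mathfrak{g}_0\oplus\bigoplus_{\lambda\in\Sigma}\mathfrak{g}_\lambda,
\end{equation*}
and the corresponding decompositions of $\mathfrak{h}$ and $\mathfrak{p}$, one identifies $T_{(h,a)}(H/H_\mathfrak{a}\times\mathfrak{a}_+)$ with $(\mathfrak{h}/\mathfrak{h}_\mathfrak{a})\oplus\mathfrak{a}$. The differential of $\phi$ factors through left translation by $h$ (an isometry) and through the differential of $a\mapsto \exp(a)\cdot o$. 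The key classical computation gives that on each pair of opposite root spaces $\mathfrak{g}_\lambda\oplus\mathfrak{g}_{-\lambda}$, the operator $\ad(a)$ acts with eigenvalue $\pm\lambda(a)$; after exponentiation the resulting block in the Jacobian of $\phi$ has determinant proportional to $\sinh^{m_\lambda}(\lambda(a))$, where $m_\lambda=\dim\mathfrak{g}_\lambda$. Taking the product over positive roots yields the density
\begin{equation*}
\phi^*\,d\vol_g \;=\; C'\,\Bigl(\prod_{\lambda>0}\sinh^{m_\lambda}(\lambda(a))\Bigr)\,dh\,da
\end{equation*}
on $(H/H_\mathfrak{a})\times\mathfrak{a}_+$, with $dh$ the normalized $H$-invariant measure and $da$ Lebesgue measure. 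This is the essence of Harish-Chandra's polar integration formula, and I would quote it from Helgason (cited in the paper) rather than rederive it in full.

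Third, to pass from integration over $\mathfrak{a}_+$ to integration over all of $\mathfrak{a}$ as written in the statement, I would use the Weyl group $W=N_H(\mathfrak{a})/H_\mathfrak{a}$, which acts on $\mathfrak{a}$ by isometries, permutes the chambers simply transitively, and permutes the roots. Hence $\prod_{\lambda>0}\sinh^{m_\lambda}(\lambda(a))$ on $\mathfrak{a}_+$ extends to the $W$-invariant function $\prod_{\lambda>0}\sinh^{m_\lambda}(|\lambda(a)|)$ on all of $\mathfrak{a}$, and averaging over $|W|$ chambers absorbs a factor $|W|$ into the overall constant $C$. The main obstacle, and the only genuinely nontrivial ingredient, is the Jacobian computation in step two: it relies on a careful identification of how the adjoint action on the root spaces pushes forward through $\exp$ to tangent vectors of $M$, and on matching volume normalizations consistently between $\mathfrak{h}/\mathfrak{h}_\mathfrak{a}$, $H/H_\mathfrak{a}$, $\mathfrak{a}$, and the Riemannian volume on $M$; all other bookkeeping can be hidden in the constant $C$.
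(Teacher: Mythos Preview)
Your proposal is correct and is precisely the standard Harish-Chandra/Helgason argument. Note, however, that the paper does not actually prove this proposition: it is stated without proof and the reader is referred to Section II and Appendix A of \cite{Sa17} and ultimately to \cite{Hel79}. What you have outlined---polar decomposition $M \simeq (H/H_{\mathfrak{a}})\times\mathfrak{a}_+$ up to a null set, the Jacobian computation via the root-space decomposition producing the $\sinh^{m_\lambda}$ factors, and the Weyl-group symmetrization to pass from $\mathfrak{a}_+$ to $\mathfrak{a}$---is exactly the argument underlying those references, so there is nothing to compare: you have supplied the proof the paper chose to cite.
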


Even though, it has more structure, we will view $\mathfrak{a}$ just as a real vector space, whose dimension defines the \textbf{rank of $G/H$}.
This fact lets us simplify the partition function on a symmetric space to the form \begin{align}
\label{eq:Z_general}
    Z(\sigma)=Z(\bar{x}, \sigma)= \frac{\omega(S)}{|W|}\int_{\mathfrak{a}} \exp\big(-\frac{B(a,a)}{2 \sigma^2}\big) \prod_{\lambda \in \Delta^+}|\sinh{\lambda(a)}|^{m_{\lambda}} da
\end{align}
where $B$ is the Killing form on $\mathfrak{a} \subset \Lie (G)$ and $\Delta^+$ is the set of positive roots $\lambda \in \mathfrak{a}^*$. The appearance of the Killing form of $\Lie (G)$ comes from the fact that it defines an $\Ad(G)$-invariant non-degenerate symmetric bilinear form on 
$\mathfrak{g} = \mathrm{Lie}(G)$ which descends to the metric on $M$. The constant $\omega(S)$ will be irrelevant to us as it does not depend on $\sigma$. 
Equation \eqref{eq:Z_general} is the starting point for the computations in following sections, where it will become much more concrete.

\subsection{The spaces of interest}

Using results such as Proposition \ref{thm:Lie_perspective} and the structure theory of Lie groups, it can be shown that there is essentially a finite list of irreducible symmetric spaces. They fall in three classes: spaces of non-compact type (non-positive curvature), of Euclidean type (vanishing curvature), and of compact type (non-negative curvature). The spaces that we will introduce in this paper, are of non-compact type (up to Euclidean factors). Note that we certainly do not exploit the whole list of symmetric spaces. Rather, we are mostly considering the ones of interest to the applications from Section \ref{subsec:Moti} and related spaces. In \cite{Ti20} Appendix B similar and further spaces have been considered and embedded into the classification scheme of \cite{Zi11}. 

There is a duality, which relates a symmetric spaces of non-compact type $M$, to a dual symmetric space of compact type $M^\vee$. In examples, it turns out that the partition functions of mutually dual symmetric spaces are related. This will be introduced in Section \ref{sec:duality}.

\begin{defi}
\label{def:Symm_exp}

\begin{enumerate}
    \item
    \label{def:SPD}
    The spaces of symmetric, Hermitian and quaternionic Hermitian positive definite matrices will be denoted by $\mathcal{P}_{\mathbb{F}}(N)\cong \GL(N, \mathbb{F})/K$ where $K\in \{\U(N),\textrm{O}(N),\Sp(N)\}$ for $\mathbb{F}\in \{\mathbb{R}, \mathbb{C}, \mathbb{H}\}$, respectively. In each case, $\mathfrak{a} \subset \Lie(\GL(N, \mathbb{F}))\cong \End(N, \mathbb{F})$ is the $N$-dimensional space of real diagonal matrices. We will denote the partition functions by $Z_{\beta}(\sigma)$ where $\beta\in \{1,2,4\}$, respectively. $\mathcal{P}(N)\coloneqq \mathcal{P}_{\mathbb{R}}(N)$ is commonly referred to as the \textbf{space of SPD matrices}.
    \item
    \label{def:Siegel}
    The \textbf{Siegel domain} $\mathbb{D}(N)\cong \Sp(2N, \mathbb{R})/\U(N)$ is the space of complex symmetric $N\times N$ matrices $z$ such that the imaginary part $\textrm{Im}(z)$ is a positive definite matrix. We will denote its partition function by $Z_S$.
\end{enumerate}
\end{defi}

\begin{rem}
\begin{enumerate}

    \item
For the spaces $\mathcal{P}_{\mathbb{K}}(N)$, the parameter $m_{\lambda}$ from Equation \eqref{eq:Z_general} takes the form $m_\lambda=:\beta \in \{1,2,4\}$ corresponding to the real dimensions $1,2$ and $4$ of the spaces $\mathbb{R}$, $\mathbb{C}$ and $\mathbb{H}$. Calling this parameter $\beta$ is common in statistical physics and the random matrix literature \cite{Me04} and we will continue with this notation. The restriction of $\beta$ to these three values famously goes by the name of Dyson's threefold way (see \cite{Dy623}). All of these spaces belong to class $A$ in classifications of symmetric spaces (see \cite{Zi11}).
\item The Siegel domain goes beyond Dyson's initial classification and is referred to as a post-Dyson class in \cite{Zi11}. We will see that in the sense of random matrix theory $\beta=1$ will hold for the Siegel domain as well. It is identified to be $CI$ in the classification scheme of \cite{Zi11}.

\item
In some cases, such as hyperbolic $N$-space $\mathbb{H}(N)=\textrm{SO}(N,1)/\textrm{SO}(N)$, the rank does not depend on $N$:
\begin{align}
\rank(\mathbb{H}(N))=1 
\end{align}
for all $N$ so that $Z$ can be computed just by integrating over $\mathbb{R}$:
\begin{align}
    Z_{\mathbb{H}(N)}(\sigma)=\frac{\omega_{N-1}}{2} \int_{-\infty}^{\infty} \exp \left(-\frac{r^2}{2\sigma^2} \right)|\sinh(r)|^{N-1}dr.
\end{align}
This has been solved analytically in \cite{Sa17}.
However, for the spaces of interest from Definition \ref{def:Symm_exp}, we have $rank(M)\rightarrow\infty$ while $\dim(M) \rightarrow \infty$, which makes the integral $Z$ from Equation \eqref{eq:Z_general} difficult to compute numerically in high dimensions. In general, there is a dichotomy between spaces of rank $1$ and higher rank, which is well studied in the differential geometry literature \cite{Eb97}.
\end{enumerate}
\end{rem}

\subsubsection{The threefold way - Class A}
In the $A$-class, the positive root space $\Delta^+$ consists of $\lambda(a)=a_{ii}-a_{jj}$ for $i<j$, so that \eqref{eq:Z_general} gives us (see \cite{Sa17}, IV, A. and Appendix A, Example 1 for the case $\beta=2$) \begin{align}
    Z_{\beta}(\sigma)= \frac{\omega_{\beta}(N)}{N!}\int_{\mathfrak{a}}\prod \limits_{i=1}^N \exp \Big(-\frac{2a_{ii}^2}{\sigma^2}\Big) \prod_{i<j}|\sinh(a_{ii}-a_{jj})|^{\beta} da
\end{align}
where $\omega_{\beta}(N)=\omega(S)$ and $da$ is the Lebesgue measure on $\mathfrak{a}\cong \mathbb{R}^N$. Here, $\omega(S)$ is the same as in Equation (\ref{eq:Z_general}).
After a change of variables $u_i=\exp(2a_{ii})$, we end up with \begin{align}
\label{al:Z_beta}
    Z_{\beta}(\sigma)=\frac{C_{N,\beta}(\sigma)}{(2\pi)^NN!}  \int_{\mathbb{R}^N_+}\prod_{i=1}^N \Big(\exp\big(-\frac{\log^2(u_i)}{2\sigma^2}\big)\Big) |\Delta(u)|^{\beta} \prod_{i=1}^N du_i
\end{align}
where $C_{N,\beta}(\sigma)\coloneqq \frac{\omega_{\beta}(N) (2\pi)^N}{2^{NN_\beta}}\exp\Big(-NN_{\beta}^2 \frac{\sigma^2}{2}\Big)$ and
$\Delta$, the \textbf{Vandermonde determinant} given by $\Delta(u)\coloneqq \prod_{i<j} (u_j-u_i)$ and $N_{\beta} \coloneqq \frac{\beta}{2}(N-1)+1$. We included the $(2\pi)^N$ into $C_{N,\beta}$ for later convenience which will become clear in Section \ref{sec:large_N}.

\begin{rem}
The integral in \eqref{al:Z_beta} takes the standard form of a random matrix ensemble \cite{Me04} with potential given by $V_{SW}(x;\sigma)=\frac{1}{2 \sigma^2}\log^2(x)$. If instead we consider the quadratic potential $V_Q(x;\sigma)=\frac{1}{2\sigma^2}x^2$, the integral would simply correspond to the well-known orthogonal, unitary and symplectic ensembles (for $\beta \in \{1,2,4\}$ respectively) studied in classical random matrix theory \cite{Me04}. The analogy to these ensembles will be further discussed in Remark \ref{rem:semicirc}.
\end{rem}
\begin{rem}
In fact, matrix models with the potential $V_{SW}$ also appear in the physics literature as the partition functions of $\U(N)$ Chern-Simons theory on $S^3$ (\cite{Ma05,Ti04}). 
This relation is not directly of use here, but it is worth mentioning that it provided our original inspiration for probing the structure behind the normalising factors.
Moreover, the large $N$ limit of $\U(N)$ Chern-Simons theory is of physical interest and has been well studied in the theoretical and mathematical physics literature such as \cite{Ho03} Chapter 36.2 and \cite{AK07}, which motivates Section \ref{sec:large_N}. This and many further physical interpretations have recently also been discussed in \cite{Fo20}.
\end{rem}


\subsubsection{The Siegel disc - Class C}
Now, to compute $Z_S$, we consider the root system of the Lie algebra of the symplectic group $\Sp(2N, \mathbb{R})$, which looks quite different: Next to roots of the form $a_{ii}-a_{jj}$ for $i<j$ there are also roots of the form $a_{ii}+a_{jj}$ and $2a_{ii}$ and next to permuting the $a_{ii}$, we have the additional symmetry of the root system given by $a_{ii}\mapsto -a_{ii}$ so that the Weyl-group of $\mathfrak{sp}(N)$ is seen to be of order $|W|=2^NN!$. Moreover, $\omega(S)=\frac{\vol(\U(N))}{2^N}$ which fixes the prefactor.
As discussed in \cite{Sa17}, IV., C., this yields: 
\begin{align}
 Z_S(\sigma) =\frac{\vol (\U(N))}{2^{2N}N!}\int_{\mathbb{R}^N}
\prod \limits_{i=1}^N \exp \big(- \frac{a_{ii}^2}{2\sigma^2}\big) \prod \limits_{i<j} \sinh |a_{ii}-a_{jj}|\prod \limits_{i\leq j} \sinh |a_{ii}+a_{jj}| \prod \limits_{i=1}^N da_{ii}.
\end{align}
Now, introducing the parameter $u_i=\frac{1}{2}(\exp(2a_{ii})+ \exp(-2a_{ii}))= \cosh(2a_{ii})$, we find after a short calculation that $Z_S$ takes the form
\begin{align}
\label{eq:Z_S}
  Z_S(\sigma)=\frac{\vol (\U(N))2^{\frac{N(N+1)}{2}}}{N!}\int_{(1, \infty)^{N}}
\left( \prod \limits_{i=1}^N \exp \big(- V_{S}(u_i;\sigma)\big)\right)| \Delta(u)| \prod \limits_{i=1}^N du_i
\end{align}
where $V_S$ is the potential of the Siegel domain defined on $u \in (1, \infty)$ by
\begin{align}
    \label{eq:Pot_Siegel}
    V_S(u;\sigma)=\frac{\log^2(u+\sqrt{u^2-1})}{8\sigma^2}.
\end{align}
Note that $Z_S$ has the same form as $Z_1$ except with an even more complicated potential. Partition functions similar to \eqref{eq:Z_S} with $\beta=2$ as exponent of $\Delta$ have been commented on in \cite{Ti20} (page 23) and might be of interest in obtaining information about $Z_S$.

Let us now describe methods from random matrix theory on how to solve these integrals analytically at finite $N$ values. 

\section{Exact formulas at finite $N$ and (skew-)orthogonal Polynomials}
\label{sec:finite_N}

In this section we will use orthogonal polynomials (specifically, Stieltjes-Wigert polynomials) to solve the easiest case, $Z_2$, which is less challenging but also less interesting for applications than $Z_1$ (SPD matrices). This will be useful for applications anyway, as $Z_1$ and $Z_2$ are related in the large $N$ limit. Moreover, we will discuss a method for computing $Z_1$ and $Z_S$ in terms of coefficients of so-called skew-orthogonal polynomials, which can be found numerically by a symplectic Gram-Schmidt algorithm. The techniques are known from the classical ensembles in random matrix theory \cite{Me04}.

\begin{defi}

\label{defi:ortho_pol}
Let $V:(a,b)\rightarrow \mathbb{R}$ be a given potential. A set of polynomials $\{R_i:i=1, \dots ,N\}$, with $R_j(x)=a_jx^j+ \dots$ of degree $j$ is called 

\begin{enumerate}
    \item 
\textbf{orthogonal with potential $V$} if they form an orthonormal basis of the space of degree $N$ polynomials with respect to the inner product \begin{align}
   \langle f,g\rangle_2=\int_a^b \exp\big(-V(x)\big) f(x)g(x)dx
\end{align}

\item
\textbf{skew-orthogonal with potential $V$} if they bring the skew-symmetric inner product  
\begin{align}
   \frac{1}{2} \langle f,g\rangle_1=\int_a^b \int_a^b f(x)g(y)\operatorname{sign}(x-y) \exp (-V(x)) \exp(-V(y))dxdy
\end{align}
to the standard form, meaning \begin{align}
    \langle R_{2k},R_{2l} \rangle_1&=\langle R_{2k+1},R_{2l+1} \rangle_1=0\\
    \langle R_{2k},R_{2l+1} \rangle_1&=-\langle R_{2l+1},R_{2k} \rangle_1=\delta_{kl}.
\end{align}
\end{enumerate}
\end{defi}

\subsection{An exact formula for $Z_2$} \label{subsec:exactz2}

In the classical unitary ensemble of \cite{Me04} with potential $V_{Q}(x; \sigma)=\frac{1}{2\sigma^2}x^2$ on $\mathbb{R}$, it is well known that the (rescaled) Hermite polynomials are orthogonal with respect to $V_Q$. For our potential $V_{SW}(x;\sigma)=\frac{1}{2\sigma^2}\log^2(x)$ on $\mathbb{R}_{>0}$ the respective orthogonal polynomials are less standard but also well known (see \cite{Sz39}, 2.7):
\begin{lem}
\label{Lem:SW}
To match notation with \cite{Sz39}, denote $q\coloneqq e^{\sigma^2}$. The so called \textbf{Stieltjes-Wieger polynomials} given by  \begin{align}
P^{SW}_n(x;\sigma)\coloneqq \frac{(-1)^n q^{\frac{n}{2}+\frac{1}{4}}}{\Big(\prod \limits_{i=1}^n (1-q^i)\Big)^{\frac{1}{2}}} \sum \limits_{j=0}^n \left[\begin{array}{cc}
     n \\
     j
\end{array}\right]_q q^{j^2} (-q^{\frac{1}{2}}x)^j
\end{align}
are orthogonal with potential $V_{SW}(x;\sigma)=\frac{1}{2\sigma^2}\log^2(x)$. Here, we have used the notation  \begin{align}
   \left [\begin{array}{cc}
     n \\
     j
\end{array}\right]_q \coloneqq \frac{[n]_q!}{[j]_q![n-j]_q!}
\end{align}
where \begin{align}
    [n]_q!\coloneqq \prod \limits_{k=1}^n (1-q^k)
\end{align}
\end{lem}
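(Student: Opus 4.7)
The plan is to verify orthogonality directly, reducing the integral on $(0,\infty)$ to a Gaussian integral on $\mathbb{R}$ and then exploiting a $q$-binomial identity. Since the polynomials $P^{SW}_n(x;\sigma)$ are given explicitly, it suffices by linearity and dimension count to verify (i) $\int_0^\infty x^j P^{SW}_n(x;\sigma) e^{-V_{SW}(x;\sigma)}\,dx = 0$ for all $0 \le j < n$, and (ii) $\int_0^\infty (P^{SW}_n(x;\sigma))^2 e^{-V_{SW}(x;\sigma)}\,dx = 1$.

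First I would compute the moments of the log-normal weight. Substituting $y = \log(x)$, completing the square, and recognising the resulting Gaussian integral yields
\[
m_k := \int_0^\infty x^k \exp\!\left(-\frac{\log^2(x)}{2\sigma^2}\right) dx = \sqrt{2\pi\sigma^2}\; q^{(k+1)^2/2}, \qquad q = e^{\sigma^2}.
\]
Substituting the explicit formula for $P^{SW}_n$ into the integral against $x^j$, each monomial integrates to a moment $m_{k+j}$, and after factoring out the common prefactor and the $j$-dependent factor $q^{(j+1)^2/2}$, the orthogonality claim reduces to showing that for every $0 \le j < n$,
\[
S_{n,j} := \sum_{k=0}^{n} \left[\begin{array}{c} n \\ k \end{array}\right]_{q} (-1)^k q^{3k^2/2 + k(j + 3/2)} = 0.
\]

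The main obstacle is recognising this sum as an instance of the finite $q$-binomial theorem (Cauchy's identity)
\[
\sum_{k=0}^n \left[\begin{array}{c} n \\ k \end{array}\right]_{q} (-1)^k q^{k(k-1)/2} z^k = \prod_{i=0}^{n-1}(1 - zq^i),
\]
since our exponent is quadratic in $k$ with coefficient $3/2$ rather than the required $1/2$. The fix is to apply the reflection $k \mapsto n - k$ using the symmetry $\bigl[{n \atop k}\bigr]_q = \bigl[{n \atop n-k}\bigr]_q$, which converts the $q^{3k^2/2}$ into a shifted quadratic that splits as $q^{k(k-1)/2}$ times an overall $k$-linear factor, thereby putting $S_{n,j}$ into Cauchy form. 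The resulting specialisation takes the shape $\prod_{i=0}^{n-1}(1 - q^{j+1-n+i})$, which vanishes precisely when $j + 1 - n + i = 0$ for some $i \in \{0,\dots,n-1\}$, i.e.\ whenever $0 \le j \le n-1$. This is exactly the required range.

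For the normalisation, I would compute $\langle P_n^{SW}, x^n \rangle_2$ using the same technique (the sum now falls outside the vanishing range and collapses to a single $q$-Pochhammer product via Cauchy), and combine with the explicit leading coefficient $a_n = (-1)^n q^{n/2+1/4}(-q^{1/2})^n q^{n^2} / \prod_{i=1}^n(1-q^i)^{1/2}$ to verify $\langle P_n^{SW}, P_n^{SW}\rangle_2 = a_n \langle P_n^{SW}, x^n\rangle_2 = 1$. The square roots in the denominator of the stated formula are precisely what is needed to cancel against the product $\prod_{i=1}^n(1-q^i)$ emerging from Cauchy's identity, confirming the normalisation. The entire argument is also contained in Szegő's treatment cited as \cite{Sz39}, 2.7, to which one may appeal for any of the $q$-series manipulations.
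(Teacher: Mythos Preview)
The paper does not actually prove this lemma; it simply records the polynomials and cites Szeg\H{o} \cite{Sz39}, \S2.7. Your strategy of reducing to the log-normal moments and then invoking Cauchy's finite $q$-binomial theorem is exactly the standard direct verification, so in spirit you are doing what Szeg\H{o} does.

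There is, however, a genuine gap in your reflection step. Substituting $k \mapsto n-k$ cannot change the coefficient of $k^2$ in the exponent: $3(n-k)^2/2 = 3n^2/2 - 3nk + 3k^2/2$ still carries the $3k^2/2$, so it will never split as $k(k-1)/2$ plus something linear in $k$. And indeed, with $q = e^{\sigma^2}$ as stated, your sum does \emph{not} vanish: already $S_{1,0} = 1 - q^3 \neq 0$, so $P^{SW}_0$ and $P^{SW}_1$ would fail to be orthogonal. The source of the trouble is a sign in the lemma's convention: to match Szeg\H{o} (and to be consistent with the factors $1 - e^{-k\sigma^2}$ appearing in Proposition~\ref{thm:exactZ_2}) one needs $q = e^{-\sigma^2}$, not $e^{\sigma^2}$. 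With that correction the moments become $m_k = \sqrt{2\pi\sigma^2}\, q^{-(k+1)^2/2}$, and your exponent computation gives
\[
k^2 + \tfrac{k}{2} - \tfrac{(j+k+1)^2}{2} \;=\; \tfrac{k(k-1)}{2} - jk - \tfrac{(j+1)^2}{2},
\]
so the relevant sum is exactly $\sum_{k=0}^n \bigl[{n \atop k}\bigr]_q (-1)^k q^{k(k-1)/2}(q^{-j})^k = \prod_{i=0}^{n-1}(1 - q^{\,i-j})$ by Cauchy, which vanishes for $0 \le j \le n-1$ because the factor $i=j$ is zero. No reflection is needed, and the normalisation then goes through as you describe.
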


It is a well known trick \cite{Me04} that we can rewrite the Vandermonde determinant as \begin{align} \label{eq:vmondorthpo}
    \Delta(u)=\det (u_i^{j-1})= \frac{1}{\prod _{k=1}^N a_k}\det(P_{j-1}(u_i))
\end{align} 
where $P_j(x)=a_jx^j+ \dots$ is any polynomial of order $j$ with leading order coefficient $a_j$. Using orthogonal polynomials for $P_j$, it can be seen that the partition function for a $\beta=2$ ensemble drastically simplifies essentially to the product of the leading order coefficients. To see this, one expands the two determinants in the $\beta=2$ partition function and then makes use of Definition \ref{defi:ortho_pol}, which implies that all the cross terms of two determinants vanish \cite{Me04}.
In our case, we can simply read off the leading order coefficients from the Stieltjes-Wigert polynomials in Lemma \ref{Lem:SW} and use this trick, which turns out to give the following.

\begin{prop}
\label{thm:exactZ_2}
The partition function $Z_2$ for $\mathcal{P}_{\mathbb{C}}(N)$ is given by
\begin{align*}
Z_2(\sigma)= \frac{\omega_2(N)}{2^{N^2}}(2 \pi \sigma^2)^{\frac{N}{2}} \exp \Big((N^3-N)\frac{\sigma^2}{6}\Big)\prod \limits_{k=1}^{N-1}(1-e^{-k\sigma^2})^{N-k}
\end{align*}
\end{prop}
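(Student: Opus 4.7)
The plan is to carry out the standard random-matrix-theoretic reduction for a $\beta=2$ ensemble: use the trick indicated in (eq:vmondorthpo) with the Stieltjes-Wigert polynomials provided by Lemma \ref{Lem:SW} to collapse the Vandermonde-squared integral, and then simplify the resulting explicit product. Specialising (al:Z_beta) to $\beta=2$, so that $N_\beta = N$ and $C_{N,2}(\sigma) = \omega_2(N)(2\pi)^N 2^{-N^2}\exp(-N^3\sigma^2/2)$, gives
\begin{align*}
Z_2(\sigma) = \frac{C_{N,2}(\sigma)}{(2\pi)^N N!}\int_{\mathbb{R}_+^N}\Delta(u)^2\prod_{i=1}^N e^{-V_{SW}(u_i;\sigma)}\,du_i.
\end{align*}
Writing $\Delta(u) = (\prod_{k=0}^{N-1}a_k)^{-1}\det(P_k^{SW}(u_i;\sigma))$ via (eq:vmondorthpo), squaring and expanding the two determinants as a double sum over permutations, and integrating against $\prod_i e^{-V_{SW}(u_i;\sigma)}du_i$, the orthogonality of the Stieltjes-Wigert polynomials eliminates every cross term and yields
\begin{align*}
\int_{\mathbb{R}_+^N}\Delta(u)^2\prod_{i=1}^N e^{-V_{SW}(u_i;\sigma)}\,du_i \;=\; N!\prod_{k=0}^{N-1}\frac{h_k}{a_k^2},
\end{align*}
where $h_k \coloneqq \int_0^\infty (P_k^{SW}(x;\sigma))^2\, e^{-V_{SW}(x;\sigma)}\,dx$ is the squared norm.

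The next step is to extract $a_k$ and $h_k$ from Lemma \ref{Lem:SW}. Reading off the $x^n$-coefficient of $P_n^{SW}$ gives $a_n = q^{n^2+n+1/4}/\sqrt{(q;q)_n}$, where $q=e^{\sigma^2}$ and $(q;q)_n = \prod_{i=1}^n(1-q^i)$. Using the elementary identities
\begin{align*}
\sum_{n=0}^{N-1}\!\big(2n^2+2n+\tfrac{1}{2}\big) = \tfrac{2(N^3-N)}{3}+\tfrac{N}{2},\qquad \prod_{n=0}^{N-1}(q;q)_n = \prod_{i=1}^{N-1}(1-q^i)^{N-i},
\end{align*}
(the second by counting how often each factor $(1-q^i)$ appears in the nested product) collapses $\prod_{k=0}^{N-1} a_k^2$ into a single explicit factor. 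The norms $h_k$ are computed by a short separate calculation: substituting $x = e^v$ and completing the square reduces each $h_k$ to a Gaussian moment, and the total product $\prod_{k=0}^{N-1} h_k$ contributes precisely the factor $(2\pi\sigma^2)^{N/2}$ together with an explicit exponential in $\sigma^2$.

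Finally, the assembly is purely algebraic. The identity $1-q^i = -q^i(1-q^{-i})$ combined with $\sum_{i=1}^{N-1}i(N-i) = (N^3-N)/6$ and $\sum_{i=1}^{N-1}(N-i) = N(N-1)/2$ rewrites
$\prod_{i=1}^{N-1}(1-q^i)^{N-i}$ as $(-1)^{N(N-1)/2}\, q^{(N^3-N)/6}\prod_{k=1}^{N-1}(1-e^{-k\sigma^2})^{N-k}$, thereby converting the (negative) factors $(1-q^i)$ into the manifestly positive factors $(1-e^{-k\sigma^2})$ appearing in the statement. Collecting all powers of $q$ together with the $\exp(-N^3\sigma^2/2)$ from $C_{N,2}$ and the exponential arising from $\prod h_k$ consolidates into the clean exponent $(N^3-N)\sigma^2/6$, the $(2\pi\sigma^2)^{N/2}$ factor from $\prod h_k$ survives, the prefactor $(2\pi)^N$ in $C_{N,2}$ cancels against the one in the denominator, and $N!$ cancels, leaving $\omega_2(N)/2^{N^2}$ as the overall constant.

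The main obstacle is the normalization bookkeeping. The Stieltjes-Wigert polynomials in Lemma \ref{Lem:SW} are naturally orthonormal with respect to a log-normal probability measure rather than the bare weight $e^{-V_{SW}(x;\sigma)}\,dx$ that appears in $Z_2$, so the norms $h_k$ with respect to the latter are nontrivial and must be tracked with care; a secondary subtlety is the sign issue arising from $q=e^{\sigma^2}>1$ making $(q;q)_n$ negative, which is resolved precisely by the sign-flipping identity $1-q^i=-q^i(1-q^{-i})$ used in the final step, the $(-1)^{N(N-1)/2}$ then being absorbed by a compensating sign hidden in $\prod h_k$. Everything else reduces to routine geometric sums.
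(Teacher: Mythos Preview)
Your approach is exactly the one the paper intends: rewrite $\Delta(u)$ via (\ref{eq:vmondorthpo}) using the Stieltjes--Wigert polynomials of Lemma~\ref{Lem:SW}, exploit orthogonality to collapse the $\beta=2$ integral to $N!\prod_k h_k/a_k^2$, read off the leading coefficients, and simplify. The paper itself only states this in a single sentence before the proposition and gives no further details, so you are supplying precisely the bookkeeping the paper omits---in particular your explicit handling of the normalization discrepancy between the Szeg\H{o} weight and the bare $e^{-V_{SW}}dx$, and of the sign issue coming from $q=e^{\sigma^2}>1$, are genuine points that the paper glosses over.
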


The use of Stieltjes-Wiegert polynomials to solve $Z_2$ in the same way as above has already been known in the context of Chern-Simons theory (see \cite{Ma05}, \cite{Ti04}). Note that Proposition \ref{thm:exactZ_2} relies on the "coincidence" that the orthogonal polynomials are known.

\subsection{Exact formulas for $Z_1$ and $Z_S$}

For technical simplicity we will always assume $N=2m$ is even when we are dealing with $Z_1$ and $Z_S$. For applications this is no major restriction, as we can always simply delete one column/row of data, which is not too significant when we are working with high-dimensional data. In \cite{Ti20}, Section III and Appendix C the case in which $N$ is odd has also been considered, which is an important result in a systematic treatment.

Unfortunately, we do not know of any formulas for skew-orthogonal polynomials with potentials $V_{SW}$ or $V_S$ and it seems hard to reconstruct them from the orthogonal polynomials along the lines of \cite{Ad99} in cases where the potential is not a polynomial. It is worth noting that we do not even know of a formula for orthogonal polynomials with potential $V_S$, which would already be useful (see Section \ref{sec:large_N}).
If we knew of such skew-orthogonal polynomials, we could get similar formulas for $Z_1$ and $Z_S$ by making use of the following De Brujin identities \cite{DB55}:

\begin{lem}[De Brujin Identity]
\label{lem:DeBrujin}
Let $\phi:[a,b] \rightarrow \mathbb{R}^N$ be an integrable function. Then
\begin{align}
\label{eq:DeBrujin}
    \int_{\{a\leq x_1<\dots < x_N\leq b\}} \det_{1\leq i,j\leq N}(\phi_i(x_j))dx=\Pfaff(A)
\end{align}
where \begin{align}
    A_{i,j}=\int_a^b\int_a^b  \operatorname{sign}(x-y )\phi_i(x)\phi_j(y) dx dy.
\end{align}
and $\Pfaff$ denotes the Pfaffian of an antisymmetric matrix \cite{DB55}.
\end{lem}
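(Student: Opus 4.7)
The natural strategy is to expand the Pfaffian, substitute in the definition of $A$, and match the resulting iterated integral to the LHS via the antisymmetry of the determinant. I would begin from the pairing-style formula
\begin{align*}
\Pfaff(A) = \frac{1}{2^m m!}\sum_{\pi \in S_{2m}} \operatorname{sgn}(\pi) \prod_{k=1}^m A_{\pi(2k-1),\pi(2k)},
\end{align*}
plug in $A_{ij} = \int\!\!\int \operatorname{sign}(x-y)\phi_i(x)\phi_j(y)\,dx\,dy$, and rename the two dummy variables in the $k$-th factor as $(x_{2k-1}, x_{2k})$. Interchanging the finite sum and the integral, the key observation $\sum_\pi \operatorname{sgn}(\pi) \prod_j \phi_{\pi(j)}(x_j) = \det(\phi_i(x_j))$ collapses the permutation sum, giving
\begin{align*}
\Pfaff(A) = \frac{1}{2^m m!}\int_{[a,b]^{2m}} \Big(\prod_{k=1}^m \operatorname{sign}(x_{2k-1}-x_{2k})\Big)\det(\phi_i(x_j))\, dx.
\end{align*}

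It remains to relate this cube integral to the simplex integral on the LHS. The plan is to decompose $[a,b]^{2m}$ into the $(2m)!$ open simplices $R_\sigma = \{x_{\sigma(1)} < \cdots < x_{\sigma(2m)}\}$. On each $R_\sigma$, a relabeling of integration variables combined with the column-antisymmetry of $\det(\phi_i(x_j))$ shows that the integrand equals $c(\sigma)\operatorname{sgn}(\sigma)\,\det(\phi_i(y_j))\big|_{R}$, where
\begin{align*}
c(\sigma) := \prod_{k=1}^m \operatorname{sign}\bigl(\sigma^{-1}(2k-1) - \sigma^{-1}(2k)\bigr).
\end{align*}
Summing over $\sigma$ reduces the lemma to the purely combinatorial identity $\sum_{\sigma \in S_{2m}} c(\sigma)\operatorname{sgn}(\sigma) = \pm\, 2^m m!$, the overall sign depending only on the conventions for $\operatorname{sign}$ and $\Pfaff$ (a quick check at $m=1$ fixes it).

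The main obstacle is this last combinatorial step. My approach is to exhibit a group action on $S_{2m}$ under which the summand is invariant: transpositions $(2k-1,2k)$ inside a pair flip both $c(\sigma)$ and $\operatorname{sgn}(\sigma)$, while permutations of the pair blocks (which are products of disjoint double transpositions) preserve both factors separately. Hence $c(\sigma)\operatorname{sgn}(\sigma)$ is constant on each orbit. The orbits are in bijection with perfect matchings of $\{1,\dots,2m\}$, and each has cardinality $2^m m!$, so
\begin{align*}
\sum_{\sigma \in S_{2m}} c(\sigma)\operatorname{sgn}(\sigma) = 2^m m! \sum_M c(\tau_M)\operatorname{sgn}(\tau_M),
\end{align*}
where $\tau_M$ is the canonical representative of the matching $M$ (pairs listed in increasing order by smallest element). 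On this canonical $\tau_M$ one has $c(\tau_M) = (-1)^m$, and the remaining sum $\sum_M \operatorname{sgn}(\tau_M)$ is just $\Pfaff$ of the antisymmetric matrix with entries $\operatorname{sign}(j-i)$, which evaluates to $\pm 1$. Combining these pieces closes the argument; verification in the base case $m=1$, where the identity is immediate from the definitions, provides a reliable sanity check on the sign conventions.
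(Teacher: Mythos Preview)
The paper does not supply a proof of this lemma; it simply cites De Bruijn's original paper \cite{DB55} and then applies the identity in the proof of Proposition~\ref{thm:finiteN_SPD}. So there is no ``paper's own proof'' to compare against.

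Your argument is essentially the standard one and is sound. One streamlining: the combinatorial sum you isolate can be recognised \emph{directly} as a Pfaffian without passing through the orbit decomposition. Setting $\pi = \sigma^{-1}$ in the pairing formula shows
\[
\sum_{\sigma \in S_{2m}} c(\sigma)\,\operatorname{sgn}(\sigma)
\;=\; 2^m m!\,\Pfaff(B),\qquad B_{ij}=\operatorname{sign}(i-j),
\]
so the orbit-and-matching bookkeeping collapses to evaluating $\Pfaff(B)$, which is $(-1)^m$ by a one-line cofactor induction. This reproduces the value $-2$ you find at $m=1$ and explains the global sign discrepancy with the lemma as stated: with the convention $A_{ij}=\int\!\!\int \operatorname{sign}(x-y)\phi_i(x)\phi_j(y)\,dx\,dy$ one actually gets $\Pfaff(A)=(-1)^m\int_{\{x_1<\cdots<x_N\}}\det(\phi_i(x_j))\,dx$, while the usual formulation (and De Bruijn's) uses $\operatorname{sign}(y-x)$. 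You were right to flag this and to resolve it by the $m=1$ check; in the paper the lemma is only used through products of leading coefficients, so the sign is immaterial there.
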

In fact there is another De Brujin identity that is relevant to the treatment of the quaternionic case $Z_4$ (see III-C of \cite{Ti20}). We will not go through the computational details of this case. Further discussion of this case can be found in \cite{Ti20}, \cite{MM91}.
Applying Lemma \ref{lem:DeBrujin} to our case of interest leads to the following proposition.

\begin{prop}
\label{thm:finiteN_SPD}
Let $N=2m$ be even for technical convenience. Let $a_i$ and $b_i$ be the leading order coefficients of skew-orthogonal polynomials $\{P_i(x; \sigma)=a_i(\sigma)x^i|i=1,\dots ,N\}$ for the potential $V_{SW}(x;\sigma)=\frac{1}{2\sigma^2}\log^2(x)$ on $\mathbb{R}_{>0}$ and  skew-orthogonal polynomials $\{Q_i(x; \sigma)=b_i(\sigma)x^i|i=1,\dots ,N\}$ for the potential $V_{S}(x; \sigma)=\frac{1}{8\sigma^2}\log^2(x+\sqrt{x^2-1})$ on $(1, \infty)$. 

\begin{enumerate}
    \item 
The partition function $Z_1$ is given by \begin{align}
    Z_1(\sigma)=\frac{\omega_1(N)}{2^{Nm}} \exp\big(-N((N-1)/2+1)^2(\sigma^2/2)\big) \bigg (\prod_{l=1}^N a_l(\sigma) \bigg)^{-1}
\end{align}
\item The partition function $Z_S$ is given by
\begin{align}
    Z_S(\sigma)=\vol (\U(N))2^{m^2(N+1)}\bigg (\prod_{l=1}^N b_l(\sigma)(\sigma) \bigg)^{-1}.
\end{align}
A formula for $\vol(\U(N))$ can be found in Lemma \ref{lem:omegas} of the appendix.
\end{enumerate}

\end{prop}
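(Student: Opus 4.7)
Both statements follow the same random-matrix recipe applied to the integral forms \eqref{al:Z_beta} (for $Z_1$, with $V=V_{SW}$) and \eqref{eq:Z_S} (for $Z_S$, with $V=V_S$), differing only in the overall constant and in which skew-orthogonal family is used. The plan is first to exploit the $S_N$-symmetry of the integrand to replace the integration over $\mathbb{R}_+^N$ (resp.\ $(1,\infty)^N$) by $N!$ times the integral over the ordered region $u_1<\dots<u_N$, on which $|\Delta(u)|=\Delta(u)\ge 0$; this $N!$ cancels the $1/N!$ present in both prefactors. I would then invoke the polynomial Vandermonde identity \eqref{eq:vmondorthpo} with the prescribed skew-orthogonal family (relabelled if needed so that the degrees run over $0,\dots,N-1$) to obtain
\begin{equation*}
\prod_{j=1}^N e^{-V(u_j)}\,\Delta(u) \;=\; \Big(\prod_{l=1}^N a_l\Big)^{-1}\det_{1\le i,j\le N}\!\big(e^{-V(u_j)}\,P_{i-1}(u_j)\big),
\end{equation*}
with the weights $e^{-V(u_j)}$ absorbed column-by-column into the determinant.

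Setting $\phi_i(x):=e^{-V(x)}P_{i-1}(x)$, Lemma \ref{lem:DeBrujin} converts the ordered integral into $\Pfaff(A)$, where
\begin{equation*}
A_{ij}=\int\!\!\int \operatorname{sign}(x-y)\,e^{-V(x)-V(y)}P_{i-1}(x)\,P_{j-1}(y)\,dx\,dy = \tfrac{1}{2}\langle P_{i-1},P_{j-1}\rangle_1
\end{equation*}
is, up to the factor $1/2$, the skew bilinear pairing of Definition \ref{defi:ortho_pol}. By the defining relations of the skew-orthogonal polynomials, after permuting rows and columns by parity $A$ becomes block-diagonal with $m=N/2$ copies of $\tfrac{1}{2}\bigl(\begin{smallmatrix}0&1\\-1&0\end{smallmatrix}\bigr)$, so $\Pfaff(A)=2^{-m}$. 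The analogous computation with $V=V_S$ and the skew-orthogonal family $\{Q_i\}$ yields the same Pfaffian value with $\prod a_l$ replaced by $\prod b_l$.

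The two stated formulas then follow by collecting prefactors: for $Z_1$, combining $N!\cdot\Pfaff(A)\cdot(\prod_l a_l)^{-1}$ with the constant $C_{N,1}(\sigma)/((2\pi)^N N!)$ of \eqref{al:Z_beta}, where $C_{N,1}(\sigma)=\omega_1(N)(2\pi)^N 2^{-N N_1}\exp(-N N_1^2\sigma^2/2)$ with $N_1=(N-1)/2+1$; for $Z_S$, combining with the prefactor $\vol(\U(N))\,2^{N(N+1)/2}/N!$ of \eqref{eq:Z_S}. The only genuine obstacle is the arithmetic of exponents: tracking the powers of $2$ arising from the successive changes of variables $u_i=e^{2a_{ii}}$ and $u_i=\cosh(2a_{ii})$, from the Pfaffian factor $2^{-m}$, and from the normalisation prefactors $2^{-N N_1}$ and $2^{N(N+1)/2}$, together with any shift between the ``$P_i$ of degree $i$'' convention used in the statement and the ``degrees $0,\dots,N-1$'' convention required for the Vandermonde identity. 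No step in the argument requires an idea beyond Definition \ref{defi:ortho_pol} and Lemma \ref{lem:DeBrujin}.
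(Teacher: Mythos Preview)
Your approach is identical to the paper's: symmetrise to the ordered domain, rewrite $\Delta$ via \eqref{eq:vmondorthpo} with the skew-orthogonal family, apply the De Bruijn identity of Lemma~\ref{lem:DeBrujin}, and read off the Pfaffian from the block structure dictated by Definition~\ref{defi:ortho_pol}. The only discrepancy is the value of that Pfaffian---you obtain $2^{-m}$ (which is what Definition~\ref{defi:ortho_pol} with the $\tfrac{1}{2}$ normalisation actually gives), whereas the paper records $2^{m}$ from blocks $\bigl(\begin{smallmatrix}0&2\\-2&0\end{smallmatrix}\bigr)$---but you have already correctly flagged this power-of-$2$ bookkeeping as the one place requiring care.
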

\begin{proof}
\begin{enumerate}
    \item 

First, let us consider $Z_1$. Recall from Equation \eqref{al:Z_beta} that
\begin{align}
   Z_1(\sigma)=  \frac{C_{N,1}}{(2\pi)^N}(\sigma) \int_{\mathbb{R}^N_{>0}}\left(\prod_{i=1}^N \exp\left(-V_{SW}(u_i; \sigma)\right)\right) |\Delta(u)| \prod_{i=1}^N du_i.
\end{align}
The Vandermonde matrix can be rewritten as
\begin{align*}
    \Delta(x)=\det(x_i^{j-1})=\big(\prod_{l=0}^{N-1} c_l\big)^{-1}\det(R_{j-1}(x_i))
\end{align*}
where $R_k(x)=c_kx^k+\dots$ is any degree $k$ polynomial. We will choose $R_k=P_k$ to be skew-orthogonal polynomials so that Definition \ref{defi:ortho_pol} implies that the right hand side of the De Brujin identity \eqref{eq:DeBrujin} reduces to $2^{N/2}=\Pfaff(B)$ where $B$ is now the $2N\times 2N$ block diagonal matrix with $N$ blocks of the form
$\bigl(\begin{smallmatrix}
   0 & 2 \\
   -2 & 0 \\
 \end{smallmatrix}\bigr)$ on the diagonal. Combined with the De Brujin identity \eqref{eq:DeBrujin} this gives: \begin{align}
    Z_1(\sigma)&=\frac{C_{N,1}(\sigma)}{(2\pi)^N} \int_{0< u_1< \dots< u_n<\infty} \det(\exp\left(-V_{SW}(u_i; \sigma)\right)u_i^{j-1})\prod_{i=1}^N du_i\\
    &=\frac{C_{N,1}(\sigma) }{(2\pi)^N}\int_{0< u_1< \dots< u_n<\infty} \det(\exp\left(-V_{SW}(u_i; \sigma)\right)u_i^{j-1})\prod_{i=1}^N du_i
    \\
    &\overset{\eqref{eq:DeBrujin}}{=}\frac{\omega_1(N)}{2^{NN_1}} \exp\big(-NN_1^2(\sigma^2/2)\big) \big(\prod_{l=0}^{N-1} a_l\big)^{-1} 2^m \\
    &=\frac{\omega_1(N)}{2^{Nm}} \exp\big(-N((N-1)/2+1)^2(\sigma^2/2)\big) \bigg (\prod_{l=1}^N a_l \bigg)^{-1}
\end{align}
where we have used $N_1=(N-1)/2+1$ and $NN_1-m=Nm$.
\item
The same trick works for $Z_S$. Namely, start from Equation \eqref{eq:Z_S} and rewrite the Vandermonde determinant now in terms of $Q_i$. Again, using the De Brujin identity \eqref{eq:DeBrujin}, we arrive at
\begin{align}
      Z_S(\sigma)&=\vol (\U(N))2^{\frac{N(N+1)}{2}}\int_{1<u_1<\dots<u_N<\infty}
\prod \limits_{i=1}^N \exp \big(- V_{S}(u_i)\big) \Delta(u) \prod \limits_{i=1}^N du_i\\
&=\vol (\U(N))2^{m^2(N+1)}\bigg (\prod_{l=1}^N b_l \bigg)^{-1}.
\end{align}

\end{enumerate}
\end{proof}

\begin{rem}
\label{rem:fin_N}
In \cite{Ti20}, the De Brujin identity is used to numerically compute $Z_1$ directly from the Pfaffian without skew-orthogonal polynomials. This seems to get numerically challenging for the orders of $N$ we would need for many applications, e.g. $N > 100$.
An alternative approach may be to compute the skew-orthogonal polynomials numerically via a symplectic Gram-Schmidt algorithm and then use Proposition \ref{thm:finiteN_SPD}. This might still be numerically hard as we would have to find different polynomials for every $\sigma$ independently. Nonetheless, Proposition \ref{thm:finiteN_SPD} is still very useful, as it can give us some intuition about when the approximation by the large $N$ limit, which will be discussed in Section \ref{sec:large_N}, is good enough. 
\end{rem}
\begin{rem}
In Appendix A of \cite{Ti20}, the first three examples of skew-orthogonal polynomials have been computed explicitly. Moreover, \cite{Ti20} relates these skew-orthogonal polynomials to a very important object that we do not study in great detail here: The \textbf{expected} eigenvalue density $\rho$ at finite $N$. We do not discuss this function for finite $N$ and refer to \cite{Ti20} (chapters II, III) for a discussion where it is studied for $\beta \in \{1,2,4\}$ and also for $N$ even and odd. We will discuss the large $N$ limit of this distribution in the next section.
\end{rem}

\section{The large $N$ Limit}
\label{sec:large_N}

Motivated by the applications from Section \ref{subsec:Moti} and the difficulties  of solving $Z_1$ and $Z_S$ for finite, large $N$ (see Remark \ref{rem:fin_N}), we are interested in the \textbf{large $N$} or \textbf{planar limit}  where $N\rightarrow \infty$ while the so-called \textbf{'t Hooft parameter} $t=N\sigma^2$ is fixed. In particular, this limit is about more than just sending $N\rightarrow \infty$, so that the term large $N$ limit can be a bit misleading. This limit is frequently considered in various situations in the physics literature originally dating back to the 1970s where it was studied in the context of quantum chromodynamics (see \cite{tH74}). A perspective on its physical interpretation and origin will be given in Section \ref{subsubsec:QFT}.
As we are keeping $t=N\sigma^2$ fixed while sending $N\rightarrow \infty$, we also have $\sigma^2\rightarrow0$. For this reason, the limit is referred to as the \textbf{double scaling limit} in section II, 3. c) of \cite{Ti20}. \cite{Ti20} also studies other interesting limits  in II, 3. a), b), namely $\sigma^2 \rightarrow 0$ and $\sigma^2 \rightarrow \infty$ while $N$ is fixed.

\subsection{The large $N$ limit from orthogonal polynomials}

\label{subsct:limit_ortho}
We begin with an elementary calculation of the large $N$ limit for $\beta=2$ starting from Proposition \ref{thm:exactZ_2}. 

\begin{prop}
\label{thm:Z_2_calc}
The large $N$ limit of $Z_2$ is given by \begin{align}
\label{eq:calculatian_Z_2}
    \frac{1}{N^2}\log (Z_2(\sigma))\sim - \frac{1}{2} \log \left(\frac{2N}{\pi}\right)+ \frac{3}{4}+ \frac{t}{6} - \frac{\Li_3(e^{-t})-\zeta(3)}{t^2}
\end{align}
where $\Li_3(x)\coloneqq \sum \limits_{k=1}^\infty \frac{x^k}{k^3}$ (for $|x|<1$) is the trilogarithm and $\zeta$ is the Riemann Zeta function. 

\end{prop}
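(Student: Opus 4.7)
My plan is to start from the exact closed-form expression for $Z_2(\sigma)$ provided by Proposition~\ref{thm:exactZ_2} and pass to the limit $N \to \infty$ with the 't~Hooft parameter $t = N\sigma^2$ held fixed. Taking logarithms converts the product into a sum, and substituting $\sigma^2 = t/N$ everywhere yields
\begin{equation*}
\log Z_2 = \log\omega_2(N) - N^2\log 2 + \tfrac{N}{2}\log(2\pi t/N) + \tfrac{(N^2-1)t}{6} + \sum_{k=1}^{N-1}(N-k)\log(1-e^{-kt/N}).
\end{equation*}
All the nontrivial analytic content sits in the last sum, which I denote $S_N(t)$; everything else is either an elementary polynomial in $N$ or a group-theoretic volume factor.

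First I would analyze $S_N(t)/N^2$. Writing $x_k = k/N$ recasts it as the Riemann sum $\tfrac{1}{N}\sum_k(1-x_k)\log(1-e^{-x_k t})$ for the function $f(x) = (1-x)\log(1-e^{-xt})$ on $[0,1]$. Since $f$ has only an integrable logarithmic singularity at $x = 0$, a simple cut-off argument (split $[0,\varepsilon]$ and $[\varepsilon,1]$, then let $\varepsilon \to 0$ after $N \to \infty$) shows $S_N(t)/N^2 \to \int_0^1 f(x)\,dx$. To evaluate the limiting integral I would expand $\log(1-e^{-u}) = -\sum_{n\geq 1}e^{-nu}/n$, swap sum and integral (justified by dominated convergence since $t>0$), and compute the elementary integrals arising from integrating $(1-x)e^{-nxt}$ by parts. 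Summing the resulting series term by term reassembles the polylogarithms $\Li_2(e^{-t})$, $\Li_3(e^{-t})$ and zeta values $\zeta(2)$, $\zeta(3)$ via their defining series, and after cancellations yields the stated $-(\Li_3(e^{-t})-\zeta(3))/t^2$ piece (together with auxiliary $t$-dependent constants that must be absorbed in subsequent steps).

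Next I would handle the remaining $N$-dependent prefactors. The cubic exponent gives $(N^2-1)t/(6N^2) \to t/6$ directly. The factor $\tfrac{N}{2}\log(2\pi t/N) = -\tfrac{N}{2}\log N + O(N)$ contributes only at order $\log N/N$ after dividing by $N^2$ and is negligible to leading order. The dominant subleading piece is the combination $\omega_2(N)/2^{N^2}$: using the explicit formula for $\omega_2(N)$ (a product of factorials and powers of $\pi$ coming from the volume of $\U(N)$, cf. Lemma~\ref{lem:omegas}) together with Stirling's approximation $\log\Gamma(N+1) = N\log N - N + \tfrac{1}{2}\log(2\pi N) + O(1/N)$, one extracts exactly the $-\tfrac{1}{2}\log(2N/\pi) + \tfrac{3}{4}$ contribution after combining with the constant pieces emerging from the Riemann sum.

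The main obstacle will be the bookkeeping at the $\log N$ and $O(1)$ orders. The volume prefactor $\omega_2(N)$, the normalization $2^{-N^2}$, the Gaussian factor $(2\pi\sigma^2)^{N/2}$, and the subleading corrections to the Riemann-sum approximation of $S_N(t)$ near the singularity at $x=0$ all enter at these orders simultaneously, and must be combined consistently to isolate the precise coefficient $-\tfrac{1}{2}\log(2N/\pi) + \tfrac{3}{4}$. By contrast, the identification of the polylogarithm structure and the $t/6$ term is essentially a direct term-by-term computation.
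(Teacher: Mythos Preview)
Your approach is essentially identical to the paper's: starting from the exact formula of Proposition~\ref{thm:exactZ_2}, the paper likewise isolates the prefactor $\omega_2(N)$ (handled via Barnes/Stirling asymptotics, giving $-\tfrac{1}{2}\log(N/2\pi)+\tfrac{3}{4}$) and recognises $\tfrac{1}{N^2}\sum_{k}(N-k)\log(1-e^{-kt/N})$ as a Riemann sum for $\int_0^1(1-x)\log(1-e^{-tx})\,dx$, which it evaluates by expanding the logarithm as a power series and integrating term by term. Your worry about careful bookkeeping at the $\log N$ and $O(1)$ orders is somewhat overstated, since the paper treats these terms rather casually and the $(2\pi\sigma^2)^{N/2}$ factor is simply dropped as subleading.
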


The proposition follows from a direct calculation based on the following lemmas.

\begin{lem}
For the prefactor, we have \begin{align}
    \frac{1}{N^2} \log(\omega_2(N)) \sim - \frac{1}{2} \log\left( \frac{N}{2\pi}\right) + \frac{3}{4}.
\end{align}
\end{lem}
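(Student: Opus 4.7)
The plan is to unpack $\omega_2(N)$ via the explicit formula for $\vol(\U(N))$ supplied by Lemma \ref{lem:omegas}, and then extract the $N^2$-order asymptotic by Stirling / Barnes $G$-function expansion. Recall that the prefactor $\omega_2(N)$ in Equation \eqref{al:Z_beta} originates in the constant $\omega(S)$ of Proposition \ref{thm:int_sym}, which for $\mathcal{P}_{\mathbb{C}}(N)=\GL(N,\mathbb{C})/\U(N)$ is proportional to $\vol(\U(N))$ modulo factors of the form $(2\pi)^{aN}\,2^{bN}\,N!$. The latter contribute only $O(N\log N)$ to $\log\omega_2(N)$ and hence vanish after division by $N^2$, so the leading $N^2$-order behaviour of $\log\omega_2(N)$ coincides with that of $\log\vol(\U(N))$.

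For the volume itself I would use the standard closed form
$$\vol(\U(N)) = \frac{(2\pi)^{N(N+1)/2}}{\prod_{k=1}^{N-1} k!},$$
and compute the denominator by the Barnes--type identity
$$\sum_{k=1}^{N-1}\log(k!) = \sum_{j=1}^{N-1}(N-j)\log j = \frac{N^2}{2}\log N - \frac{3N^2}{4} + O(N\log N),$$
which follows from Euler--Maclaurin applied to $\int_1^{N}(N-x)\log x\,dx$. Substituting back,
$$\log \vol(\U(N)) = \frac{N(N+1)}{2}\log(2\pi) - \sum_{k=1}^{N-1}\log(k!) = -\frac{N^2}{2}\log\!\Big(\frac{N}{2\pi}\Big) + \frac{3N^2}{4} + O(N\log N).$$
Dividing by $N^2$ and absorbing the subleading remainder into the $\sim$ relation yields the claimed asymptotic.

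The only nontrivial step is the bookkeeping: one must check that the precise proportionality constant between $\omega_2(N)$ and $\vol(\U(N))$ obtained by reading off the factors $\tfrac{(2\pi)^N}{2^{NN_2}N!}$ picked up in going from Proposition \ref{thm:int_sym} to Equation \eqref{al:Z_beta} really contributes only at order $O(N\log N)$ after taking logarithms. Since $\log N! = N\log N - N + O(\log N)$ and the $(2\pi)^N$, $2^{NN_2}$ contribute only $O(N)$ to the logarithm once $\sigma^2$-independent pieces are separated, this is true but should be verified termwise against the appendix formula. Given this verification, the argument reduces to the Barnes $G$-function estimate above.
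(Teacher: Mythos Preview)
Your approach is correct and essentially matches the paper's: both rely on Lemma~\ref{lem:omegas} to write $\omega_2(N)$ in terms of $\vol(\U(N)) = (2\pi)^{N(N+1)/2}/\prod_{k=1}^{N-1}k!$ and then extract the $N^2$-order asymptotic of the Barnes-type product. The paper simply quotes the Stirling-like expansion of the Barnes $G$-function, whereas you derive the same leading terms $\tfrac{N^2}{2}\log N - \tfrac{3N^2}{4}$ directly via Euler--Maclaurin; these are equivalent.

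One simplification: your hedging about the ``precise proportionality constant between $\omega_2(N)$ and $\vol(\U(N))$'' is unnecessary. Lemma~\ref{lem:omegas} gives the exact relation $\omega_2(N) = \vol(\U(N))/(2\pi)^N$, so the only additional factor is $(2\pi)^N$, contributing $O(N)$ to the logarithm. The $N!$ and $2^{NN_2}$ factors you mention belong to $C_{N,\beta}$ and the integral normalisation in Equation~\eqref{al:Z_beta}, not to $\omega_2(N)$ itself, so no bookkeeping verification is needed. Dropping this hedge streamlines your argument without affecting its substance.
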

This is further discussed in the Appendix.
\begin{lem}
In the large $N$ limit, we have the following asymptotics \begin{align}
\label{equ:trilog}
    \frac{1}{N^2} \log\left(\prod \limits_{k=1}^{N-1}(1-e^{-k\sigma^2})^{N-k}\right) \sim \int_0^1 (1-x)\log(1-e^{-tx})dx.
\end{align}
The improper integral on the right-hand side of Equation \eqref{equ:trilog} equals $-\frac{\Li_3(e^{-t})- \zeta(3)}{t^2}$
\end{lem}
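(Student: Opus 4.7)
The plan is to establish the lemma in two stages: first, recognize the left-hand side of \eqref{equ:trilog} as a Riemann sum for the improper integral; and second, evaluate that integral by expanding $\log(1-e^{-tx})$ as a geometric series and integrating term-by-term.

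For the first stage, I would set $x_k = k/N$ and use $k\sigma^2 = t x_k$ to rewrite the logarithm of the product as $\frac{1}{N}\sum_{k=1}^{N-1}(1-x_k)\log(1-e^{-t x_k})$, which is a left-endpoint Riemann sum on the grid $\{k/N\}$ for $\int_0^1 (1-x)\log(1-e^{-tx})\,dx$. The integrand is continuous on $(0,1]$ with an integrable logarithmic singularity at $0$, since $\log(1-e^{-tx}) \sim \log(tx)$ as $x\downarrow 0$. A short derivative computation shows that the integrand is monotone on $(0,1)$, so the classical convergence theorem for Riemann sums of monotone integrable functions applies. An alternative route is to split the interval at $x=N^{-1/2}$, apply standard Riemann-sum convergence on the bulk $[N^{-1/2},1]$, and bound both the discrete tail and the integral over $[0, N^{-1/2}]$ by $O(N^{-1/2}\log N)$, which vanishes in the limit.

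For the second stage, the approach is to expand $\log(1-e^{-tx}) = -\sum_{n=1}^{\infty} e^{-ntx}/n$, convergent uniformly on $[\varepsilon,1]$ for any $\varepsilon > 0$, and to swap the sum and the improper integral via Tonelli's theorem applied to the non-negative integrand $(1-x)e^{-ntx}/n$. The inner integral $\int_0^1 (1-x) e^{-ntx}\,dx$ is elementary and admits the closed form $\frac{1}{nt} - \frac{1-e^{-nt}}{(nt)^2}$, obtained either by integration by parts or by differentiating $(1-e^{-a})/a$ in the parameter $a = nt$. Term-by-term resummation identifies $\sum_n 1/n^3 = \zeta(3)$ and $\sum_n e^{-nt}/n^3 = \Li_3(e^{-t})$, producing the trilogarithm combination on the right-hand side.

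The main technical point will be the careful bookkeeping of the lower-order pieces of the elementary antiderivative; in particular, the $\frac{1}{nt}$ contribution resums to a $\zeta(2)/t$-type term whose placement within the overall asymptotic expansion of Proposition \ref{thm:Z_2_calc} needs to be tracked against the other prefactors so that everything collates into the stated form. Beyond this bookkeeping, the remaining subtleties — justifying the sum-integral interchange near the logarithmic endpoint and controlling the Riemann-sum error uniformly in $N$ — are routine once the monotonicity of the integrand and the non-positivity of the series terms are established above.
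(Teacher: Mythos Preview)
Your two-stage plan --- Riemann sum for the first part, power-series expansion of $\log(1-e^{-tx})$ and term-by-term integration for the second --- is exactly the paper's argument. The paper is terser (it simply asserts the Riemann-sum convergence without your monotonicity or tail-splitting justification), but the skeleton is identical.

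Where you diverge is in the evaluation of $\int_0^1(1-x)e^{-ntx}\,dx$. Your value $\frac{1}{nt}-\frac{1-e^{-nt}}{(nt)^2}$ is correct; the paper records this integral as $\frac{1-e^{-kt}}{(kt)^2}$, which is an error (one can check numerically, or note that the paper's closed form $-\frac{\Li_3(e^{-t})-\zeta(3)}{t^2}$ is \emph{positive} for $t>0$ while the integrand $(1-x)\log(1-e^{-tx})$ is negative on $(0,1)$). Consequently your instinct that a $-\zeta(2)/t$ term appears is right, but your hope that it gets absorbed into the prefactors of Proposition~\ref{thm:Z_2_calc} will not pan out: none of the prefactors $C_{N,2}$, $\omega_2(N)$, or $(2\pi\sigma^2)^{N/2}$ produces a $1/t$ contribution at order $N^2$. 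The correct value of the improper integral is
\[
\int_0^1(1-x)\log(1-e^{-tx})\,dx \;=\; -\frac{\zeta(2)}{t}\;-\;\frac{\Li_3(e^{-t})-\zeta(3)}{t^2}\,,
\]
and the formula in Proposition~\ref{thm:Z_2_calc} should be amended accordingly. So: keep your computation, drop the hedge about prefactors, and flag the missing $-\zeta(2)/t$ explicitly.
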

\begin{proof}
Taking the logarithm of the product, the left-hand side of Equation \eqref{equ:trilog} reads \begin{align}
    \frac{1}{N^2} \log \left(\prod \limits_{k=1}^{N-1}(1-e^{-k\sigma^2})^{N-k}\right)= \frac{1}{N} \sum \limits_{k=1}^{N-1} (1- \frac{k}{N}) \log(1-e^{-t \frac{k}{N}})
\end{align}
This is a Riemann sum for the improper integral on the right-hand side. To evaluate this integral, one may introduce the power series of the logarithm under the integral \begin{align}
    \int_0^1(1-x)\log(1-e^{-tx})dx = - \sum \limits_{k=1}^{\infty} \frac{1}{k} \int_0^1 (1-x)e^{-ktx} dx
\end{align} 
and note that \begin{align}
    \int_0^1 (1-x)e^{-ktx}dx= \frac{1-e^{-ktx}}{(kt)^2}
\end{align}
in order to obtain  $-\frac{\Li_3(e^{-t})- \zeta(3)}{t^2}$.
\end{proof}
\begin{rem}
Note that we can drop the strange term $\log\left(\frac{N}{\pi}\right)$ of Equation \eqref{eq:calculatian_Z_2} by considering an overall $\sigma$-independent rescaling of $Z_2$:
\begin{align}
    \frac{1}{N^2}\log \left(\frac{Z_2(\sigma)}{\omega_2(N)}\right)\sim - \frac{1}{2} \log \left(\frac{2N}{\pi}\right)+ \frac{3}{4}+ \frac{t}{6} - \frac{\Li_3(e^{-t})-\zeta(3)}{t^2}.
\end{align}
Such a rescaling has no impact on any applications to parameter estimation, since there, one only considers derivatives of $\log(Z_2)$ with respect to $\sigma$ \cite{Sa17} as also noted in Section \ref{subsec:Moti}.
\end{rem}

\subsection{The saddle-point perspective}
The calculation in Section 
\ref{subsct:limit_ortho} crucially relies on having solved the partition function for any finite $N$ in the first place. Since this is not the case for $Z_1$ and $Z_S$ as discussed in Remark \ref{rem:fin_N}, we will now give a more powerful approach to the large $N$ limit inspired by the theoretical physics literature such as \cite{Ti04}, \cite{Ma05}. This approach essentially boils down to a saddle-point approximation \cite{Ma05}.

\subsubsection{The QFT setup and planar Feynman diagrams}
\label{subsubsec:QFT}

In this section, we interpret $Z$ as being the partition function of a  $0$-dimensional toy-model quantum field theory (QFT) and then discuss how perturbative arguments from physics lead to further insights. Although the physical arguments can be formalized in this $0$-dimensional setting, we will not aim to do so, as we include this section mostly for motivational purposes. The main goal is to give a physical argument for how the large $N$ limit naturally appears in QFT and how it can be interpreted in terms of Feynman diagrams. This section may well be skipped by the reader only interested in the calculation of the large $N$ limit of $Z_{\beta}$ and $Z_S$. A background on general and $0$-dimensional QFTs to the level we will need can be found in chapters 8.1, 8.2, 9.1 and 36.2 of \cite{Ho03}.

\begin{rem}
In the language of quantum field theory, the partition function $Z_{\beta}$ of covariance matrices $\mathcal{P}_{\mathbb{K}}(N)$ in the form of \eqref{al:Z_beta} can be viewed as the partition function of a quantum field theory with the following data:
\begin{enumerate}
    \item A point $\{\star\}$ as the \textbf{spacetime} of the theory.
    \item The field content consisting of a map $\phi: \{\star \}\rightarrow \mathbb{R}^N$ so that the space of fields (over which is integrated in the path integral) is simply $\mathbb{R}^N$.
    \item An action given by $S(\phi)\coloneqq S(\phi(\star))=\frac{1}{2\sigma^2}\log^2(\phi)$. The $\sigma^2$ is to be interpreted as some sort of Planck constant $\hbar$ and we include it into the action just for notational convenience.
    \item A path integral measure, $\mathcal{D}\phi= |\prod_{i<j}(\phi_i-\phi_j)|^{\beta}d^n\phi$.
\end{enumerate}
In this QFT picture, Equation \eqref{al:Z_beta} arises after fixing the gauge of a zero-dimensional field theory. Specifically, the role of the gauge group is played by the group $H$, which gets integrated out of the general formula (\ref{eq:Z_general}). The field takes values in $\mathcal{P}_{\mathbb{K}}$, and then the Vandermonde determinant can then be viewed as Faddeev-Popov determinant, coming from the standard Faddeev-Popov gauge-fixing procedure. This is discussed in (1.33) of \cite{Ma05}. As the action includes higher order terms than just quadratic, we are in fact dealing with an \textbf{interacting QFT}.
 
 Viewing our theory as a gauge theory is actually the right perspective since we will make use of certain types of Feynman diagrams, which have double-lined edges and are characteristic to gauge theories. As we will not go into any details here anyway, we will not worry about this. An elaborated review can be found in Section 1.1 of \cite{Ma05}.
 \end{rem}

Now, that we can view $Z$ as the partition function of a $0$-dimensional gauge theory, we can import some physical insights into our picture. First of all, define the \textbf{free energy} by 
\begin{align}
F(\sigma)= \log(Z(\sigma)).
\end{align}
It is a common theme in perturbative QFT to expand the free energy in certain parameters (coupling constants and $\hbar=\sigma^2$) and interpret every term pictorially as a so-called \textbf{Feynman diagram}. In our case, we do not have any coupling constants, which anyway will not affect the interpretation of $\sigma^2$ as $\hbar$. The Feynman diagrams are built up from double lined edges (also called \textbf{propagators} in the physics literature) as in Figure \ref{fig:feynman} (a) and vertices as in Figure \ref{fig:feynman} (b) for an example of a quartic vertex. The fact that we have two lines on each edge comes from integrating over matrices, which have $2$ indices in the gauge theory perspective (see 1.1 of \cite{Ma05}). These kinds of Feynman diagrams are often referred to as \textbf{ribbon diagrams} or \textbf{fatgraphs} and two examples can be seen in Figure \ref{fig:feynman} (c).

\begin{figure}
\centering
\includegraphics[width=1\linewidth]{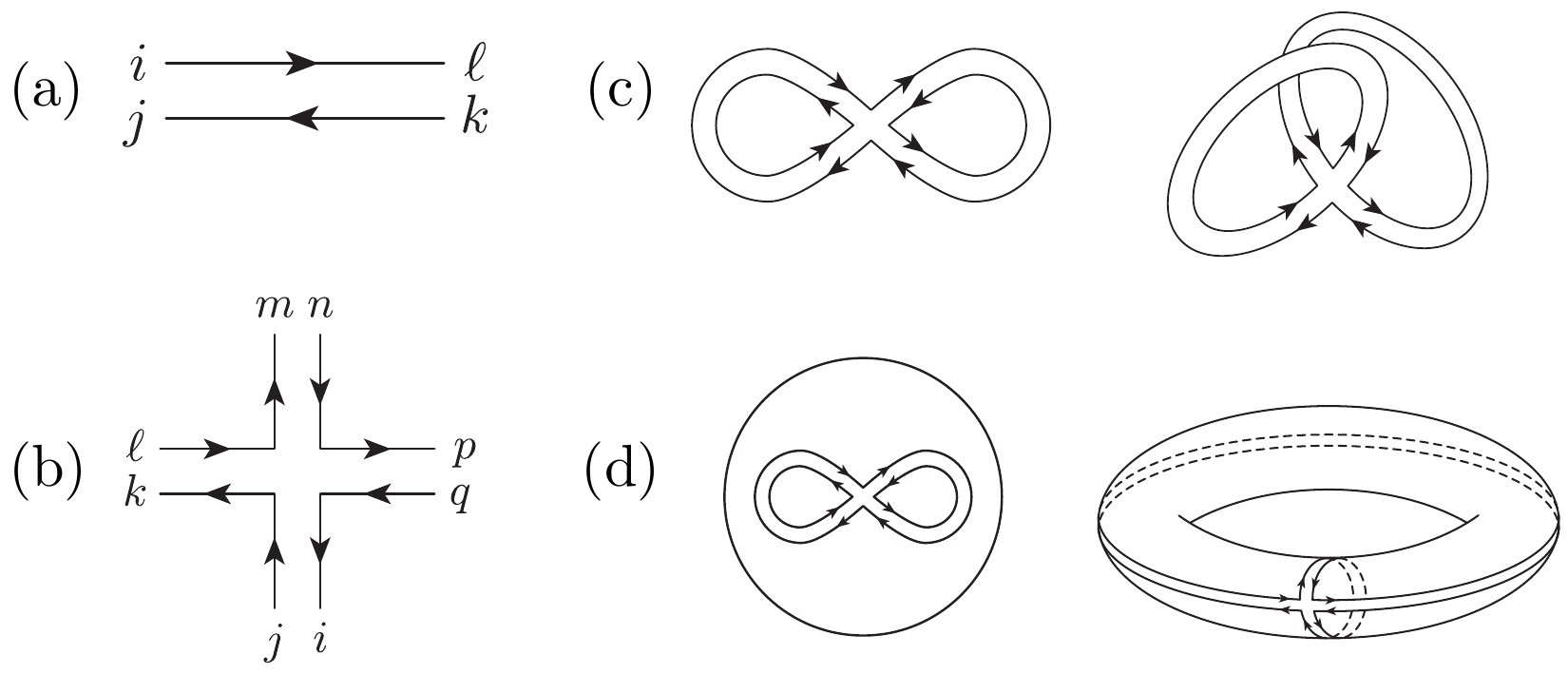}
\caption{(a) An edge of a Feynman diagram. (b) A quartic vertex of a Feynman diagram. As we have higher order terms (infinitely many) in our action, a general Feynman diagram will also have cubic, quintic, and higher order vertices. (c) Two Feynman diagrams that can be constructed from these edges and vertices. (d) Embedding of the Feynman diagrams of (c) in $S^2=\Sigma_0$ with $(g,h)=(0,3)$ (left) and in a genus-1 surface $T^2=\Sigma_1$ with $(g,h)=(1,1)$ (right), respectively.}
\label{fig:feynman}
\end{figure}

A Feynman diagram with $E$ edges, $V$ vertices and $h$ boundary components is of the order $(\sigma^2)^{E-V}N^h$ since it can be seen (by the defining Feynman rules) that each edge gives a power of $\sigma^2$ and each vertex gives a power of $\sigma^{-2}$. Most importantly, each boundary component gives a power of $N$ as such a closed loop amounts to some index contraction of the form 
\begin{align}
\sum \limits_{j=1}^k\sum \limits_{i_j=1}^N \delta_{i_1}^{i_2}\delta_{i_2}^{i_3}\dots \delta_{i_{k-1}}^{i_k} \delta_{i_k}^{i_1}=\sum_{i=1}^N \delta_i^i=N.
\end{align} 
Now, by elementary properties of the Euler characteristic (see \cite{Ha02}, page 146-147), we have $2-2g=\chi(\Sigma_g)= V-E+h$, where $g$ is the genus of the surface $\Sigma_g$ obtained by collapsing every boundary component of our Feynman diagram to a point or equivalently it is the minimal genus of a surface in which the Feynman graph can be embedded. See Figure \ref{fig:feynman} (d) for an example. So, the contribution of Feynman diagram with $E$ edges, $V$ vertices and $h$ boundary components depends on $N$ and $\sigma^2$ only through \begin{align}
    (\sigma^2)^{E-V}N^h=(\sigma^2)^{2g-2+h}N^h=t^{2g-2+h}N^{2-2g}.
\end{align}

Finally, the free energy is given by summing over all the possible connected Feynman diagrams (actually, we only sum over so-called \textbf{Vacuum bubbles}, which are diagrams without any external propagators) and grouping them in terms of $g$ and $h$, we get the following \textbf{genus expansion}: \begin{align}
\label{eq:free_energy}
    F(\sigma^2)=\sum \limits_{g=0}^{\infty} \sum \limits_{h=1}^{\infty} F_{g,h}t^{2g-2+h}N^{2-2g}=\sum \limits_{g=0}^{\infty} f_g(t) N^{2-2g}.
\end{align}

\begin{rem}
Equation \eqref{eq:free_energy} gives a beautiful interpretation of the large $N$ limit: The free energy (respectively the partition function) consists of a sum of infinitely many terms each corresponding to a Feynman diagram, that is labelled by its genus $g$ and the number of boundary components $h$ (e.g. see Figure \ref{fig:feynman} (d)). Every such diagram corresponds to a term depending on $t$, $\sigma^2$ and $N$ only through $t^{2g-2+h}$ and $N^{2-2g}$. Now, in the large $N$ limit where $N\rightarrow \infty$ while $t$ is fixed, we see that all the diagrams of genus $g>0$ (e.g. Figure \ref{fig:feynman}, (d) (right)) get suppressed and only the diagrams with $g=0$ contribute. Such diagrams, so-called \textbf{planar diagrams}, can be embedded into the $2$-sphere and hence can be drawn on a plane (see Figure \ref{fig:feynman}, (d) (left)). This also motivates calling $N\rightarrow \infty$ the planar limit.
\end{rem}

As there are still infinitely many Feynman diagrams in $f_0(t)$ and there are no coupling constants that we can send to zero, to further reduce $f_0(t)$ to finitely many diagrams, we will not attempt to compute the large $N$ limit directly from Feynman diagrams.
Rather, we will find the large $N$ limit by solving a saddle-point equation in the next section. This saddle-point equation comes from the perspective that $\sigma^2$ should be interpreted as $\hbar$, so that after fixing $t=N\sigma^2$, $\frac{1}{N}$ is proportional to $\hbar$. Hence, the large $N$ limit is a semiclassical limit with $f_0$ being the classical contribution, which can be derived from extremizing the action $S$. Anyway, we note that the Feynman diagram interpretation is certainly an interesting non-trivial perspective that might lead to more than just the above qualitative insight in the given context.

\subsubsection{The saddle-point equation}
In Proposition \ref{thm:largeN}, we will see how to directly obtain $f_0$ from solving a certain singular integral equation, the \textbf{saddle-point equation}.
Moreover, the solution to this saddle-point equation is essentially independent of $\beta>0$ (up to a rescaling $t \mapsto \beta t$). This means in particular that we can import our calculation of the large $N$ limit of $Z_2$ from Section \ref{subsct:limit_ortho} to compute the large $N$ limit of $Z_1$ (or $Z_4$).

As above, we split  $Z_{\beta}(\sigma)=C_{N,\beta}(\sigma)\Tilde{Z}_{\beta}(\sigma)$ with the prefactor
\begin{align}
    C_{N,\beta}(\sigma)=\frac{\omega_{\beta}(N)(2\pi)^N}{2^{NN_\beta}}\exp(-NN^2_{\beta} \frac{\sigma^2}{2}).
\end{align}
The partition function $\Tilde{Z}_{\beta}$ is in an appropriate form for the use of
saddle-point methods as discussed in \cite{Ma05} (see Equation (1.46) therein):
\begin{align}
\label{eq:ZTilde}
 \Tilde{Z}_{\beta}(\sigma)=\frac{1}{N!}\int_{\mathbb{R}^N_+} \exp \big(N^2 \Tilde{V}_{SW}(\vec{\lambda},\beta;t) \big)\prod_{k=1}^N \frac{d\lambda_k}{2\pi},
\end{align}
where we have introduced the \textbf{effective potential}:
\begin{align}
\label{eq:S_eff}
    \Tilde{V}_{SW}\left(\vec{\lambda},\beta;t\right)\coloneqq-\frac{1}{2tN}\sum_{i=1}^N \log^2(\lambda_i)+ \frac{\beta}{N^2} \sum_{i<j} \log |\lambda_i-\lambda_j|.
\end{align}
In the large $N$ limit, the dominant contributions to $\Tilde{Z}_{\beta}$ will come from the saddle points of $\Tilde{V}_{SW}$ and the saddle-point equation below will simply characterise the saddle points (in a certain continuum limit). For the Siegel domain, we have a similar effective potential, which takes the form
\begin{align}
\label{V_S_eff}
    \Tilde{V}_{S}\left(\vec{\lambda}; t\right) \coloneqq-\frac{1}{8tN}\sum_{i=1}^N \log^2\left(\lambda_i+\sqrt{\lambda_i^2-1}\right)+ \frac{1}{N^2} \sum_{i<j} \log |\lambda_i-\lambda_j|.
\end{align}

\begin{rem}
\label{rem:semicirc}
 The effective potential $\Tilde{V}_{SW}$ gives rise to a new physical interpretation of the theory and its large $N$ limit: the $N$ eigenvalues can be seen as static particles in the potential $V_{SW}$ interacting through a logarithmic Coulomb repulsion (the second term of the effective potential). As $\sigma^2=\frac{t}{N}$ decreases, the repulsion becomes weak and all particles can sit next to each other close to the minimum of the potential ($\lambda=1$), while the particles tend to spread out for large $\sigma^2$ as observed in Figure \ref{fig:plots}. Now, the large $N$ limit can be seen to correspond to the addition of more and more particles (i.e. $N\rightarrow \infty$) while letting their repulsion become increasingly weak (fixing $t=N\sigma^2$). Finding the limiting distribution $\rho_t$ (that still depends on $t$) turns out to characterize the large $N$ limit of the partition function. It can be obtained by solving the saddle-point equation for $\Tilde{V}_SW$ in the continuum limit as discussed below.
 Motivated by the physics literature, we will refer to this $\rho_t$ as the \textbf{master field}. 

\end{rem}

\begin{rem}
\label{rem:universality}
A further observation, from the effective action \eqref{eq:S_eff} is that up to an overall factor of $\Tilde{V}$ (which leaves the saddle-point equation invariant), we can absorb the $\beta$ by rescaling $t \mapsto t/\beta$. So, in the large $N$ limit, $\Tilde{Z}_{\beta}(\sigma) \sim \Tilde{Z}_{2}(\sqrt{\beta/2}\;\sigma)$ irrespective of the choice of $\beta \in \{1,2,4\}$, which is remarkable considering the quite distinct geometric origins associated with the different values of $\beta$.
 We refer to this phenomenon as \textbf{universality}: the three cases $\mathbb{K} \in \{\mathbb{R}, \mathbb{C}, \mathbb{H}\}$ (respectively $\beta \in \{1,2,4\}$) started out differently, but "flow" to a universal limit characterized by the master field as $N\rightarrow \infty$. This is specially useful for the particularly important case of $\beta=1$, which can now be related to the large $N$ limit of $Z_2$ derived in Proposition \ref{thm:Z_2_calc}:
\begin{align}
    \frac{1}{N^2} \log (Z_{\beta}(\sigma))\sim \frac{1}{N^2}\left[ \log\left(Z_2\left( \sqrt{\tfrac{\beta}{2}} \sigma\right)\right) - \log\left(\frac{C_{\infty, \beta}(\sigma)}{C_{\infty,2}(\sqrt{\tfrac{\beta}{2}}\sigma)}\right)\right]
\end{align}
where $C_{\infty, \beta}$ are the large $N$ limits of the prefactors, discussed in Appendix \ref{subsec:prefactors}. We will go through this in detail in Proposition \ref{thm:Z_beta_scale}.
Below, we will give a further formula for the large $N$ limit of $Z_\beta$, by solving the saddle-point equation explicitly. 
\end{rem}

The physical interpretation of the large $N$ limit from Remark \ref{rem:semicirc} might be more familiar than the Feynman diagrams, since it is well known in the random matrix theory literature as the \textbf{Coulomb gas method} \cite{Me04}. The most prominent example where it is usually introduced is in the derivation of \textbf{Wigner's semicircle law}.

\begin{ex}[Wigner's semicircle law]
\label{ex:Wig}
For simplicity, since $V_Q(\lambda;\sigma)\coloneqq \frac{1}{2\sigma^2} \lambda^2$ has a simpler form than $V_{SW}$ or $V_S$ yet illustrates all the necessary ideas, we will begin by considering its saddle-point equation.  The three cases $\beta \in \{1,2,4\}$ are now known as orthogonal, unitary and symplectic ensembles in classical random matrix theory \cite{Me04}. As motivated in Remark \ref{rem:semicirc} and following  \cite{Ma05} ((1.47)-(1.53)), we are interested in the saddle points of the effective potential, which in this case reads \begin{align}
    \Tilde{V}_Q(\vec{\lambda};t)\coloneqq-\frac{1}{tN} \sum_{i} \lambda_i^2 + \frac{\beta}{N^2} \sum_{i<j} \log |\lambda_i-\lambda_j|
\end{align}
in analogy with equations \eqref{eq:S_eff} and \eqref{V_S_eff}. The saddle points are characterized by $\frac{d}{d\lambda_k}\Tilde{V}_Q\left(\vec{\lambda};t\right)=0$ for all $k = 1, \dots, N$, which is simply \begin{align}
\label{eq:fin_saddle}
     \frac{1}{\beta t} \lambda_k=\frac{1}{N}\sum_{j\neq k} \frac{1}{\lambda_k-\lambda_j}=:P\left(\int \frac{\rho_{t,N}(\lambda)}{\lambda-\lambda_k}d \lambda \right) 
\end{align}
where $\rho_{t,N}$ is formally given by $\rho_{t,N}(\lambda)=\frac{1}{N} \sum_j \delta(\lambda-\lambda_j)$ and $P$ is a discrete Cauchy principal value.
 The large $N$ limit can now be regarded as a continuum limit, in which $\rho_{t,N}$ becomes a continuous function $\rho_t$. Equation \eqref{eq:fin_saddle} becomes 
\begin{align}
\label{eq:eom}
       \frac{1}{\beta t} \lambda= P\left(\int_{-\infty}^{\infty} \frac{\rho_t(\lambda')d \lambda'}{\lambda-\lambda'} \right)
       \iff      \frac{1}{2 t} \lambda= P\left(\int_{-\infty}^{\infty} \frac{\rho_{\frac{2}{\beta}t}(\lambda')d \lambda'}{\lambda-\lambda'} \right) 
\end{align}
where now $P$ is the actual Cauchy principal value. The \textbf{saddle-point equation} \eqref{eq:eom} can be solved using resolvent methods and in \cite{Ma05} (Equation (1.82)) it is 
shown that the solution in this case is simply a semicircle of radius $2 \sqrt{t}$: 
\begin{equation}
\rho_t(\lambda)=\frac{1}{2\pi t} \sqrt{4t-\lambda^2} \chi_{\mathcal{C}(t)}(\lambda)
\end{equation}
where $\chi_{\mathcal{C}(t)}$ is the characteristic function supported on the interval 
$\mathcal{C}(t)=[-2\sqrt{t},2 \sqrt{t}]$. 
This celebrated semicircle law was first derived by Wigner in 1955 \cite{Wi55}.

Figure \ref{fig:plots} shows the master field for $t=\frac{1}{4}$ and $\beta \in \{1,2,4\}$. We observe the expected universal scaling behaviour discussed in Remark \ref{rem:semicirc}. It has even been argued that the semicircle law remains universal, in cases where the entries of the matrix are not independent anymore, which goes beyond the normal $\beta$-ensembles  (see \cite{Kra17}).
Moreover, we see that $t \rightarrow 0$ corresponds to $\rho_t$ being supported only in a small interval around $0$ as expected from the interpretation in Remark \ref{rem:semicirc} and since $0$ is the minimum of the potential $V_Q$.
\end{ex}

The following proposition will be devoted to discussing similar steps for the potential $V_{SW}$.

\begin{prop}
\label{thm:largeN}
Let $\beta>0$. If $N\rightarrow\infty$ while the 't Hooft parameter $t=N\sigma^2$ is fixed, we get \begin{align}
    \frac{1}{N^2}\log( \Tilde{Z}_{\beta}(\sigma))\sim F_{uni}\left(\frac{\beta}{2} t\right)+ \mathcal{O}(N^{-2})
\end{align}
where \begin{align}
    F_{uni}(t)=-\frac{1}{2 t}\int_{\mathcal{C}(t)}\rho_t^{SW} (\lambda) \log^2\lambda \; d\lambda+ \int_{\mathcal{C}(t)^2} \rho_t^{SW}(\lambda) \rho_t^{SW} (\lambda') \log(|\lambda-\lambda'|) d\lambda d \lambda'
\end{align}
and the master field $\rho_t^{SW}$ is given by 
\begin{align}
\label{eq:rho}
   \rho_t^{SW} (\lambda)&= \frac{1}{\pi t \lambda} \tan^{-1}\left[ \frac{\sqrt{4 \lambda-(1+e^{-t}\lambda})^2}{1+e^{-t}\lambda} \right] \chi_{\mathcal{C}(t)}
\end{align}  
where $\mathcal{C}(t)=[2e^{2t}-e^t+2e^{\frac{3t}{2}} \sqrt{e^t-1},2e^{2t}-e^t-2e^{\frac{3t}{2}} \sqrt{e^t-1}]$.
\end{prop}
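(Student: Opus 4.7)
The plan is to treat $\Tilde Z_\beta(\sigma)$ as a Laplace-type integral with small parameter $1/N^2$, so that in the limit $N\to\infty$ at fixed $t=N\sigma^2$ the integral is dominated by the continuum saddle of the effective functional. Passing from the empirical measure $\frac{1}{N}\sum_j \delta(\lambda-\lambda_j)$ to a continuum density $\rho\geq 0$ supported on $(0,\infty)$ with $\int\rho=1$, the effective potential $\Tilde V_{SW}$ becomes the functional
\[
\Tilde V_{SW}[\rho,\beta;t] = -\frac{1}{2t}\int \log^2(\lambda)\,\rho(\lambda)\,d\lambda + \frac{\beta}{2}\iint \log|\lambda-\lambda'|\,\rho(\lambda)\,\rho(\lambda')\,d\lambda\,d\lambda'.
\]
Following the standard large-deviation heuristic for log-gas ensembles (which can be made rigorous along the lines of Ben Arous--Guionnet), one obtains $\frac{1}{N^2}\log\Tilde Z_\beta(\sigma) = \sup_\rho \Tilde V_{SW}[\rho,\beta;t] + o(1)$, with the supremum attained at a unique density.

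Varying $\Tilde V_{SW}$ with a Lagrange multiplier $\mu$ enforcing $\int\rho=1$, then differentiating the first-order condition in $\lambda$, gives the \textbf{saddle-point equation}
\[
\frac{\log\lambda}{\beta\, t\,\lambda} = P\!\int \frac{\rho(\lambda')}{\lambda-\lambda'}\,d\lambda', \qquad \lambda\in\operatorname{supp}\rho.
\]
Setting $s=\beta t/2$ and $\rho^{SW}_s := \rho^*$ renders this equation independent of $\beta$, which both establishes the universality anticipated in Remark~\ref{rem:universality} and identifies the saddle of $\Tilde V_{SW}[\,\cdot\,,\beta;t]$ with $\rho^{SW}_{\beta t/2}$. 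Substituting back into $\Tilde V_{SW}$ and using $\int\rho=1$ to eliminate $\mu$ yields $\frac{1}{N^2}\log\Tilde Z_\beta(\sigma)\sim F_{\rm uni}(\beta t/2)$, establishing the first half of the claim.

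It remains to produce the explicit $\arctan$-form of $\rho^{SW}_t$. The natural tool is the resolvent $\omega(z) = \int_{\mathcal C(t)} \rho^{SW}_t(\lambda)/(z-\lambda)\,d\lambda$, which is analytic off the cut with $\omega(z)\sim 1/z$ at infinity; the saddle-point equation becomes the Riemann--Hilbert jump $\omega_+(\lambda)+\omega_-(\lambda) = \log\lambda/(t\,\lambda)$ on the cut, and the density is then recovered via $\rho^{SW}_t(\lambda) = -\frac{1}{\pi}\operatorname{Im}\omega_+(\lambda)$. The key trick, due to Mari\~no in the Chern--Simons matrix model \cite{Ma05}, is the substitution $\lambda=e^u$, which converts the non-standard source $\log\lambda/\lambda$ into a linear source on a translated contour; a one-cut ansatz with a single square-root branch then reduces the problem to fixing the endpoints $a,b$ of $\mathcal C(t)$. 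These are pinned down by the conditions $\omega(z)\sim 1/z$ at infinity and integrable (soft-edge) square-root singularities at $a,b$, which give $a+b=2(2e^{2t}-e^t)$ and $ab=e^{2t}$, matching the endpoints in the statement. Reading off the boundary values of $\omega_+$ then produces the stated $\arctan$ formula.

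The main obstacle is this last step: the source $\log\lambda/\lambda$ is much less standard than the polynomial sources that give rise to Wigner-type semicircle laws, and one must track the branch cut of $\log$ along $(-\infty,0)$ carefully through every contour deformation, taking care that the resulting density is nonnegative on the claimed interval and integrates to one. As a consistency check, one verifies that as $t\to 0$ the endpoints collapse symmetrically about $\lambda=1$ and $\rho^{SW}_t$ reduces, after the appropriate rescaling, to the Wigner semicircle of Example~\ref{ex:Wig}, matching the physical picture of Remark~\ref{rem:semicirc}.
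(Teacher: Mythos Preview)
Your proposal is correct and follows essentially the same route as the paper's own proof: a saddle-point/Laplace approximation of $\Tilde Z_\beta$ via the effective potential, passage to the continuum density, derivation of the singular integral (saddle-point) equation, and its solution by the resolvent method of \cite{Ma05}. If anything, you are more careful than the paper (explicit Lagrange multiplier, the Ben~Arous--Guionnet large-deviation justification, the Riemann--Hilbert formulation, the endpoint conditions $a+b=2(2e^{2t}-e^t)$, $ab=e^{2t}$, and the $t\to 0$ consistency check), whereas the paper is content to be informal and simply cite \cite{Ma05} for the resolvent computation.
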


\begin{rem}
In Figure \ref{fig:plots}, Wigner's semicircle distribution is plotted alongside $\rho_t^{SW}$ for different choices of $\beta\in\{1,2,4\}$. It can be observed that as $t$ decreases (or equivalently, $\beta$ decreases), the distibutions tend to concentrate around the classical minima $\lambda=0$ (for $V_Q$) and $\lambda=1$ (for $V_{SW}$). Conversely, they tend to spread out as $t$ increases. 
The master field $\rho_t^{SW}$ which is the content of the above result has in fact previously appeared in the physics literature (see \cite{Ma05}, (2.194) on page 64) and less directly in the random matrix theory literature \cite{CL98}, \cite{KA99}. There, it has been derived using resolvent techniques which we will not review here.

\end{rem}

\begin{proof}
This proof will be quite informal and as above, we will not focus on the analytical challenges associated with taking continuum limits.
Recall, that \begin{align}
    \Tilde{Z}_{\beta}(\sigma)=\frac{1}{N!}\int_{\mathbb{R}^N_+} \exp \big(N^2 \Tilde{V}_{SW}(\lambda,\beta;t) \big)\prod_{k=1}^N \frac{d\lambda_k}{2\pi}
\end{align}
where in Equation \eqref{eq:S_eff}, we defined \begin{align}
\label{eq:proof_s_eff}
   \Tilde{V}_{SW}(\lambda,\beta;t) &= -\frac{1}{2tN}\sum_{i=1}^N \log^2(\lambda_i)+ \frac{\beta}{N^2} \sum_{i<j} \log |\lambda_i-\lambda_j|\\
    &=-\frac{1}{2 t}\int_{\mathbb{R}_{>0}}\rho_{t,N} (\lambda) \log^2(\lambda) d\lambda
    +\beta \int_{\mathbb{R}_{>0}^2} \rho_{t,N} (\lambda) \rho_{t,N}  (\lambda') \log(|\lambda-\lambda'|) d\lambda d \lambda'.
\end{align}
Here $\rho_{t,N}^{SW}$ is the density of eigenvalues of a finite-dimensional matrix, formally given by $\rho_{t,N}^{SW}(\lambda)=\frac{1}{N} \sum \limits_{j=1}^N \delta(\lambda-\lambda_j)$ as in Example \ref{ex:Wig}. 

Again, following \cite{Ma05} (1.47)-(1.53), the leading contribution to $\Tilde{Z}_{\beta}(\sigma)$ in the large $N$ limit is characterized by the saddle point of the effective potential which obeys $\frac{d}{d\lambda_k}\Tilde{V_{SW}(\lambda,\beta;t)}=0$ for all $k\in \{1, \dots, N\}$. In parallel to Example \ref{ex:Wig}, this saddle-point equation becomes a singular integral kernel equation for the master field $\rho_t^{SW}$ in the large $N$ limit:
\begin{align}
\label{eq:eom}
       \frac{1}{\beta t} \frac{\log(\lambda)}{\lambda}= P\left(\int_0^{\infty} \frac{\rho_t^{SW}(\lambda')d \lambda'}{\lambda-\lambda'} \right)\\
       \iff      \frac{1}{2 t} \frac{\log(\lambda)}{\lambda}= P\left(\int_0^{\infty} \frac{\rho_{\frac{2}{\beta}t}(\lambda')d \lambda'}{\lambda-\lambda'} \right) 
\end{align}
Again, this can be solved using resolvent methods (see \cite{Ma05}, (2.186)-(2.194)) and it can be seen that the solution is indeed given by \eqref{eq:rho}.
Finally, just by substituting $\rho_{t}$ for $\rho_{t,N}$ in Equation \eqref{eq:proof_s_eff}, we obtain the effective potential in the large $N$ limit evaluated on the master field. But as discussed previously, this is just the leading order contribution to $\frac{1}{N^2}\log(Z(\sigma))$  (see \cite{Ma05}, (1.54)), i.e. $F_{uni}(t)$ :
\begin{align}
    F_{uni}(t)=-\frac{1}{2 t}\int_{\mathbb{R}_{>0}}\rho_{t} (\lambda) \log^2(\lambda) d\lambda
    + \beta \int_{\mathbb{R}_{>0}^2} \rho_{t} (\lambda) \rho_{t}  (\lambda') \log(|\lambda-\lambda'|) d\lambda d \lambda'
\end{align}
which completes the proof.
\end{proof}

\begin{figure}
\centering

\includegraphics[width=0.9\textwidth]{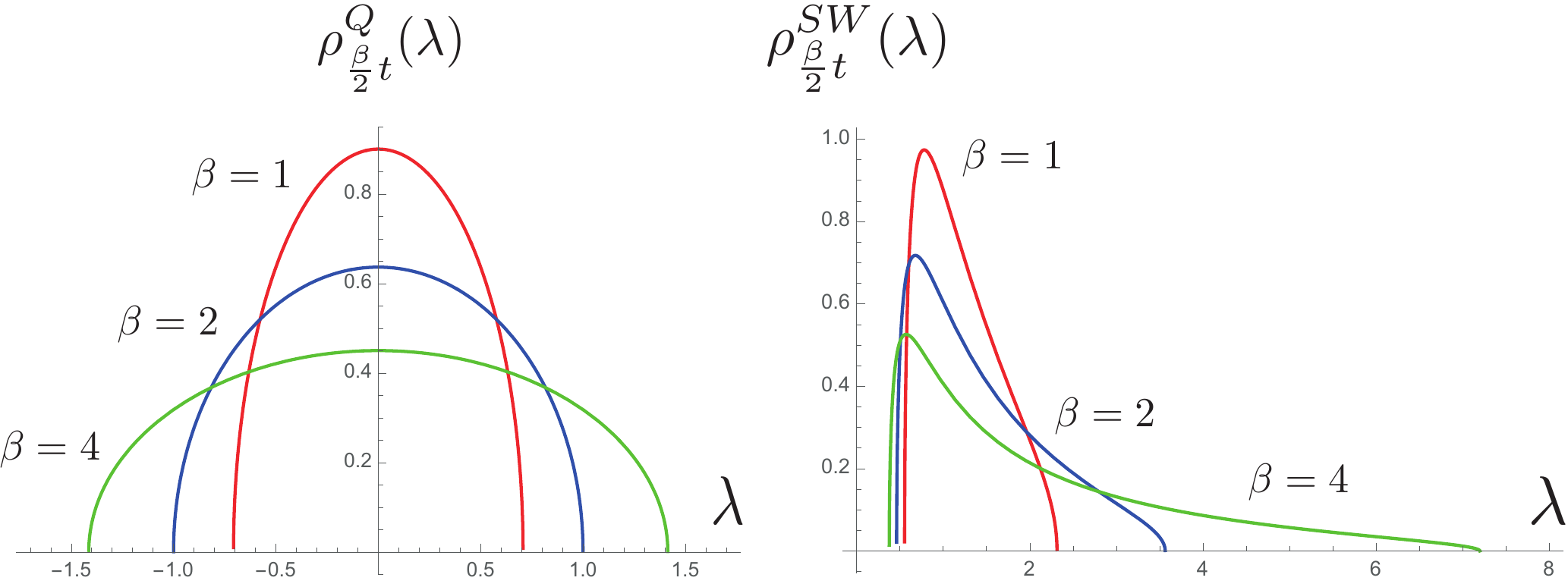}
    
    \caption{The master field $\rho^{Q}_{\beta t/2}$ of Wigner's semicircle law (left) and $\rho_{\beta t/2}^{SW}$ (right) for fixed $t=1/4$ and different choices of $\beta$. The red, blue and green curves correspond to the real, complex and quaternionic cases, respectively ($\beta = 1,2,4$).}

\label{fig:plots}

\end{figure}

\subsubsection{The large $N$ limit for $\beta=1$}
Summing up the previous discussion, the next proposition includes two formulas for the large $N$ limit of $Z_1$.
\begin{prop}
\label{thm:Z_beta_scale}
Let $C_{\infty, 1}(\sigma)$ be the large $N$ limit of the prefactor $C_{N,1}(\sigma)$ as discussed in Lemma \ref{lem:Ccalc}. The large $N$ limit of the partition function $Z_1$ is related in the following way to that of $Z_2$: \begin{align}
    \frac{1}{N^2} \log (Z_{1}(\sigma))\sim \frac{1}{N^2}\left[ \log\left(Z_2\left( \tfrac{1}{\sqrt{2}} \sigma\right)\right) - \log\left(\frac{C_{\infty, 1}(\sigma)}{C_{\infty,2}(\tfrac{1}{\sqrt{2}}\sigma)}\right)\right].
\end{align}
Another formula is given by \begin{align}
\label{eq:Zcoeffplanar}
    \frac{1}{N^2}\log(Z_{1}(\sigma))\sim \frac{1}{N^2} \log(C_{\infty,1}(\sigma))+ F_{uni}\left(\tfrac{1}{2}t\right).
\end{align}
where we follow the notation of Proposition \ref{thm:largeN}.
\end{prop}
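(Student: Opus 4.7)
The plan is to treat the two displayed identities as two straightforward consequences of Proposition \ref{thm:largeN} together with the decomposition $Z_\beta(\sigma)=C_{N,\beta}(\sigma)\tilde Z_\beta(\sigma)$ from \eqref{eq:ZTilde}. The second identity \eqref{eq:Zcoeffplanar} is essentially bookkeeping: specialize Proposition \ref{thm:largeN} to $\beta=1$, which gives $\frac{1}{N^2}\log\tilde Z_1(\sigma)\sim F_{uni}(t/2)$, then add the contribution of the prefactor. Since $Z_1(\sigma)=C_{N,1}(\sigma)\tilde Z_1(\sigma)$, taking logs and dividing by $N^2$ yields
\begin{equation*}
    \frac{1}{N^2}\log Z_1(\sigma)=\frac{1}{N^2}\log C_{N,1}(\sigma)+\frac{1}{N^2}\log\tilde Z_1(\sigma),
\end{equation*}
and invoking Lemma \ref{lem:Ccalc} to replace $C_{N,1}$ by $C_{\infty,1}$ up to subleading terms gives exactly \eqref{eq:Zcoeffplanar}.

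For the first identity, the key observation is the universality statement from Remark \ref{rem:universality}: the right-hand side of Proposition \ref{thm:largeN} depends on $\beta$ and $t$ only through the combination $\beta t/2$. Applying the proposition with the data $(\beta,\sigma)=(1,\sigma)$ gives $F_{uni}(t/2)$; applying it with $(\beta,\sigma)=(2,\sigma/\sqrt{2})$, so that the 't Hooft parameter becomes $N(\sigma/\sqrt{2})^2=t/2$, also gives $F_{uni}(t/2)$. Hence
\begin{equation*}
    \frac{1}{N^2}\log\tilde Z_1(\sigma)\sim \frac{1}{N^2}\log\tilde Z_2\!\left(\tfrac{1}{\sqrt{2}}\sigma\right)
\end{equation*}
to leading order in $N$.

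To turn this into a statement about $Z_1$ and $Z_2$, I would substitute $\tilde Z_\beta=Z_\beta/C_{N,\beta}$ on both sides and rearrange:
\begin{equation*}
    \frac{1}{N^2}\log Z_1(\sigma)\sim \frac{1}{N^2}\log Z_2\!\left(\tfrac{1}{\sqrt{2}}\sigma\right)+\frac{1}{N^2}\log\frac{C_{N,1}(\sigma)}{C_{N,2}(\tfrac{1}{\sqrt{2}}\sigma)}.
\end{equation*}
Replacing each $C_{N,\beta}$ by its large $N$ surrogate $C_{\infty,\beta}$, again via Lemma \ref{lem:Ccalc}, and flipping the sign of the ratio so it matches the statement gives the first displayed identity.

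The main delicate point is not the universality step itself, which is immediate from Proposition \ref{thm:largeN}, but the treatment of the prefactors $C_{N,\beta}(\sigma)$. These contain $\omega_\beta(N)$, $(2\pi)^N$, and an $\exp(-NN_\beta^2\sigma^2/2)$ piece whose logarithms, once divided by $N^2$, do not vanish as $N\to\infty$; so one cannot simply discard them. The legitimacy of replacing $C_{N,\beta}$ by $C_{\infty,\beta}$ at the level of $\frac{1}{N^2}\log$ rests entirely on Lemma \ref{lem:Ccalc} (together with the asymptotics for $\omega_\beta(N)$ in Appendix \ref{subsec:prefactors}), and this is where any remaining technical care is needed.
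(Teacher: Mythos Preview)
Your proof is correct and follows essentially the same route as the paper: split $Z_\beta=C_{N,\beta}\tilde Z_\beta$, invoke Proposition~\ref{thm:largeN} for the universality $\frac{1}{N^2}\log\tilde Z_1(\sigma)\sim\frac{1}{N^2}\log\tilde Z_2(\sigma/\sqrt{2})$, and handle the prefactors via Lemma~\ref{lem:Ccalc}. One small comment: your derivation correctly produces $+\log\bigl(C_{\infty,1}(\sigma)/C_{\infty,2}(\sigma/\sqrt{2})\bigr)$, so rather than ``flipping the sign of the ratio'' you should simply note that the minus sign in the displayed statement appears to be a typo.
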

\begin{proof}
We discuss the first formula for general $\beta \in \{1,2,4\}$. This is simply the definition \begin{align}
    Z_{\beta}(\sigma)&= C_{N,\beta}(\sigma)\Tilde{Z}_{\beta}(\sigma)\\
    \iff \frac{1}{N^2} \log(Z_{\beta}(\sigma))&= \frac{1}{N^2} \log\left(C_{N,\beta}(\sigma)\right) +\frac{1}{N^2} \log(\Tilde{Z}_{\beta}(\sigma))
\end{align}
combined with Proposition \ref{thm:largeN}, which particularly says \begin{align}
    \frac{1}{N^2}\log(\Tilde{Z}_{\beta}(\sigma))\sim \frac{1}{N^2}\log(\Tilde{Z}_{2}( \sqrt{\frac{\beta}{2}}\sigma)).
\end{align}
Equation \eqref{eq:Zcoeffplanar} is simply the statement of Proposition \ref{thm:largeN} for $\beta=1$ with the large $N$ limit of the prefactor included.
\end{proof}

\subsubsection{The large $N$ limit of the Siegel domain and class DIII}

As in Proposition \ref{thm:largeN}, the large $N$ limit of $Z_S$ is fully characterised by its master field $\rho_t^S$. Computing the derivative $\frac{d}{d\lambda}V_S$, we see that $\rho_t^S$ in turn is governed by the following saddle-point equation:
\begin{align}
\label{eq:saddle_S}
   \frac{\log\left(\lambda+\sqrt{\lambda^2-1}\right)}{4t\sqrt{\lambda^2-1}}=     P\left(\int_1^{\infty} \frac{\rho_t^S(\lambda')d \lambda'}{\lambda-\lambda'} \right).
\end{align}
In this case we do not know of a specific solution to the equation yet. However, we hope to solve equation \eqref{eq:saddle_S} either analytically using resolvent methods or numerically in future work. 

Another way to approach the large $N$ limit of the Siegel domain is to consider the same ensemble with $\beta=2$, which may be solved exactly \cite{Ti20}. From this, we might compute the $\beta=2$ large $N$ limit with potential $V_S$ which would then be related to the $\beta=1$ large $N$ limit (i.e. the large $N$ limit of $Z_S$) in a way similar to propositions \ref{thm:exactZ_2} and    \ref{thm:Z_beta_scale}. 

\begin{rem}
There is another interesting universality here between Equation \eqref{eq:saddle_S} and the saddle-point equation of a partition function 
\begin{align}
z_{\beta=1}^{\mathfrak{so}(2N)}=\frac{1}{2N!} \int_{0<r_1<\dots<r_N}
\prod \limits_{i=1}^N e^{- \frac{r_i^2}{2\sigma^2}} \prod \limits_{i<j} 4\sinh |\frac{r_i-r_j}{2}|\sinh |\frac{r_i+r_j}{2}| \prod \limits_{i=1}^N da_{ii}
\end{align}
introduced and studied in \cite{Ti20}, Appendix C. In the language of \cite{Zi11}, this partition function corresponds to the class $DIII$ as identified by \cite{Ti20}. In \cite{Ti20}, (91)-(94), this partition function has been computed to be 
\begin{align}
    \int_{1<x_N<\dots<x_1}
\prod \limits_{i<j}(x_i-x_i)\prod \limits_{i=1}^N \exp(-V^{so}(x_i;\sigma)) dx_i
\end{align}
with the potential \begin{align}
    V^{so}(x;\sigma)=\frac{1}{2\sigma^2}\log^2(x+\sqrt{x^2-1})+\log(\log(x+\sqrt{x^2-1})).
\end{align}
Note that the first term is just $V_S$, the potential of the Siegel domain (up to rescaling of $\sigma$). For finite $N$, the second term changes things and the (skew-)orthogonal polynomials will differ from the ones for the Siegel domain. However, in the large $N$ limit, where $\sigma^2\rightarrow0$, this complicated second term gets suppressed so that the saddle-point equation will be of the same form as the saddle-point equation  \eqref{eq:saddle_S} of the Siegel domain.
\end{rem}

\section{Duality} \label{sec:duality}
Let $M \simeq G/H$ be a Riemannian symmetric space of non-compact type. Its dual symmetric space $M^\vee$ is a Riemannain symmetric space of compact type,
$M^\vee = G^*/H$. Here, if the Lie algebra $\mathfrak{g}$ of $G$ has Cartan decomposition $\mathfrak{g} = \mathfrak{h} + \mathfrak{p}$, then $G^*$ is the connected Lie group with Lie algera $\mathfrak{g}^* = \mathfrak{h} + i\mathfrak{p}$ (where $i = \sqrt{-1}$).

Recall the Riemannian symmetric space $M = \mathcal{P}_{\mathbb{C}}(N)$, which is given by $G =\GL(N,\mathbb{C})$ and $H = \U(N)$. In this case, $\mathfrak{p}$ is the space of Hermitian matrices (so $i\mathfrak{p} = \mathfrak{u}(N)$, the space of skew-Hermitian matrices) and $\mathfrak{g}^* = \mathfrak{u}(N) + \mathfrak{u}(N)$. Thus, one has the dual space $M^\vee = \U(N) \times \U(N)/\U(N) \simeq \U(N)$. 
In other words, the dual of the space $M = \mathcal{P}_{\mathbb{C}}(N)$ of positive-definite Hermitian matrices is the unitary group $M^\vee = \U(N)$.

Consider now a family of distributions on $M^\vee$, which will be called $\Theta$ distributions, and which display an interesting connection with Gaussian distributions on $M$, studied in \ref{subsec:exactz2}.
Recall Jacobi's $\vartheta$ function (to follow the original notation of Jacobi, this should be written as $\vartheta(e^{i\phi}|q)$ where $q = \exp(-\sigma^2)$, as in~\cite{watson}), 
$$
\vartheta(e^{\scriptscriptstyle i\phi}|\sigma^{\scriptscriptstyle 2}) \,=\, \sum^{+\infty}_{m=-\infty} \exp(-m^2\hspace{0.02cm}\sigma^2 + 2m\hspace{0.03cm}i\phi).
$$
As a function of $\phi$, up to some minor modifications, this is just a wrapped normal distribution (in other words, the heat kernel of the unit circle),
$$
\frac{1}{2\pi}\hspace{0.03cm}
\vartheta\!\left(e^{\scriptscriptstyle i\phi}|{\scriptstyle \frac{\sigma^2}{2}}\right) \,=\,  \hspace{0.03cm}\sum^{\infty}_{m=-\infty} \exp\left[ - \frac{(2\phi - 2m\pi)^2}{2\sigma^2}\right].
$$
Each $x \in M^\vee$ can be written as $x = k\cdot e^{i\theta}$ for some $k \in U(N)$ and $e^{i\theta} = \mathrm{diag}(e^{i\theta_i}\,;i=1,\ldots, N)$, where $k\cdot y = k\hspace{0.02cm}y\hspace{0.02cm}k^\dagger$, for $y \in M^\vee$ (  $^\dagger$ denotes the conjugate-transpose). With this notation, define the following matrix $\vartheta$ function,
\begin{equation} \label{eq:THETAF}
  \Theta\left(x\middle|\sigma^2\right) \,=\, k\cdot \vartheta\!\left(e^{\scriptscriptstyle i\theta}|{\scriptstyle \frac{\sigma^2}{2}}\right)
\end{equation}
which is obtained from $x$ by applying Jacobi's $\vartheta$ function to each eigenvalue of $x$. Further, consider the positive function,
\begin{equation} \label{eq:THETAD}
 f_*(x|\bar{x}\hspace{0.02cm},\sigma) \,=\, \det\left[\left( 2\pi\hspace{0.03cm}\sigma^2\right)^{\!\frac{1}{2}}\hspace{0.03cm}\Theta\!\left(x\bar{x}^\dagger\middle|\sigma^2\right)\right]
\end{equation}
which is also equal to 
$$
\det\left[\left( 2\pi\hspace{0.03cm}\sigma^2\right)^{\!\frac{1}{2}}\hspace{0.03cm}\Theta\!\left(\bar{x}^\dagger x\middle|\sigma^2\right)\right]
$$ 
since the matrices $x\bar{x}^\dagger$ and $\bar{x}^\dagger x$ are similar. Then, let $Z_{\scriptscriptstyle M^\vee}(\sigma)$ denote the normalising constant
\begin{equation} \label{eq:zstar}
 Z_{\scriptscriptstyle M^\vee}(\sigma) = \int_{M^\vee}f_*(x|\bar{x}\hspace{0.02cm},\sigma)\,\mathrm{vol}(dx)
\end{equation}
which does not depend on $\bar{x}$, as can be seen by introducing the new variable of integration $z = x\bar{x}^\dagger$ and using the invariance of $\mathrm{vol}(dx)$ (compare to Remark \ref{rem:z_invariance}). 

Now, define a $\Theta$ distribution $\Theta(\bar{x},\sigma)$ as the probability distribution on $M^\vee$, whose probability density function, with respect to $\mathrm{vol}(dx)$, is given by 
\begin{equation} \label{eq:thetadensity}
  p_*(x|\bar{x}\hspace{0.02cm},\sigma) \,=\,\left(Z_{\scriptscriptstyle M^\vee}(\sigma)\right)^{-1}\hspace{0.03cm}f_*(x|\bar{x}\hspace{0.02cm},\sigma).
\end{equation}
\begin{prop} \label{prop:thetadual}
  Let $Z_{\scriptscriptstyle M}(\sigma) = Z_2(\sigma)$, be given by Proposition \ref{thm:exactZ_2}, and $Z_{\scriptscriptstyle M^\vee}(\sigma)$ be given by (\ref{eq:zstar}). Then, the following equality holds
\begin{equation} \label{eq:thetadual}
  \frac{Z_{\scriptscriptstyle M}(\sigma)}{Z_{\scriptscriptstyle M^\vee}(\sigma)} = \exp\left[{\small\left(\frac{N^3 - N}{6}\right)}\sigma^{2}\right].
\end{equation}
\end{prop}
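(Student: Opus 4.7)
The plan is to compute $Z_{\scriptscriptstyle M^\vee}(\sigma)$ directly and match the result against the closed form of $Z_2(\sigma)$ given in Proposition~\ref{thm:exactZ_2}. By exactly the substitution $z = x\bar{x}^\dagger$ used in Remark~\ref{rem:z_invariance}, together with the invariance of $\vol(dx)$, $Z_{\scriptscriptstyle M^\vee}(\sigma)$ does not depend on $\bar{x}$, so I would set $\bar{x} = I$. Then $f_*(x|I,\sigma) = (2\pi\sigma^2)^{N/2}\prod_{j=1}^N \vartheta(e^{i\theta_j}|\sigma^2/2)$, where $e^{i\theta_1},\ldots,e^{i\theta_N}$ are the eigenvalues of $x$; since this is a class function on $\U(N)$, Weyl's integration formula reduces the computation to a torus integral weighted by $|\Delta(e^{i\theta})|^2$ with the usual $\vol(\U(N))/(N!(2\pi)^N)$ prefactor.

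Next I would reapply the orthogonal-polynomial trick of Subsection~\ref{subsec:exactz2} in its unitary guise: writing $|\Delta(e^{i\theta})|^2 = \det(e^{i(j-1)\theta_k})\overline{\det(e^{i(j-1)\theta_k})}$ and applying Andréief's identity (the diagonal special case of Lemma~\ref{lem:DeBrujin}), the torus integral collapses to $N!\,(2\pi)^N\det(M_{jk})_{j,k=1}^N$, where $M_{jk}$ is the Fourier coefficient of $\vartheta(e^{i\theta}|\sigma^2/2)$ at wavenumber $j-k$. Reading off the theta series \eqref{eq:THETAF}, these coefficients are Gaussian in the index, so up to an overall $2\pi$ one gets $M_{jk} = e^{-(j-k)^2\sigma^2/2}$.

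The combinatorial core of the argument is the evaluation
\begin{equation*}
\det\!\big(e^{-(j-k)^2\sigma^2/2}\big)_{j,k=1}^N \;=\; \prod_{d=1}^{N-1}(1 - e^{-d\sigma^2})^{N-d}.
\end{equation*}
I would factor $e^{-(j-k)^2\sigma^2/2} = e^{-j^2\sigma^2/2}\,e^{jk\sigma^2}\,e^{-k^2\sigma^2/2}$ to reduce the middle piece to the Vandermonde determinant in the variables $x_j = e^{j\sigma^2}$, giving $\prod_{j<k}(e^{k\sigma^2}-e^{j\sigma^2})$. Rewriting $e^{k\sigma^2}-e^{j\sigma^2} = e^{k\sigma^2}(1-e^{-(k-j)\sigma^2})$ and tracking the accumulated exponential prefactors, the classical identity $\sum_{d=1}^{N-1}d(N-d) = (N^3-N)/6$ kicks in and forces an exact cancellation, leaving the displayed product. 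Notably, this is the unique place where $(N^3-N)/6$ enters the computation, which is encouraging given the statement.

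Assembling the pieces yields $Z_{\scriptscriptstyle M^\vee}(\sigma) = \vol(\U(N))\,(2\pi\sigma^2)^{N/2}\prod_{d=1}^{N-1}(1-e^{-d\sigma^2})^{N-d}$. Dividing the formula of Proposition~\ref{thm:exactZ_2} by this, every $\sigma$-dependent factor cancels except for $\exp((N^3-N)\sigma^2/6)$; the $\sigma$-independent prefactors reduce to $\omega_2(N)/(2^{N^2}\vol(\U(N)))$, which equals $1$ by matching the Weyl integration normalization used to define $\omega_2(N)$ in \eqref{al:Z_beta} against the formula for $\vol(\U(N))$ in Lemma~\ref{lem:omegas} of the appendix. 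The main obstacle I expect is the bookkeeping in the determinant evaluation: tracking the Gaussian and Vandermonde exponential factors and applying $\sum d(N-d) = (N^3-N)/6$ at the right moment is delicate, and a slip of a sign or a factor of two would destroy the exact cancellation needed to recover precisely $\exp((N^3-N)\sigma^2/6)$. A secondary concern is ensuring that the convention on the theta series \eqref{eq:THETAF} is read consistently with the eigenvalue parameterization fed into Weyl's formula, so that the Fourier coefficients $M_{jk}$ do indeed take the Gaussian form above and not a parity-restricted variant.
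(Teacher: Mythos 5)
Your proposal is correct and reaches the same intermediate formula for $Z_{\scriptscriptstyle M^\vee}(\sigma)$ as the paper, but by a genuinely different evaluation of the torus integral. The paper follows the same template as its $Z_2$ computation: after reducing to the class-function integral $I_2$ via the Weyl integration formula, it writes $|\Delta(e^{i\theta})|^2$ in terms of orthonormal trigonometric polynomials for the weight $\vartheta(e^{i\theta}|\sigma^2/2)$ --- identified explicitly as rescaled Rogers--Szeg\"o polynomials --- and reads off $I_2 = N!\prod_{n=0}^{N-1}p_{nn}^{-2}$ with $p_{nn}^{-2}=\prod_{m=1}^{n}(1-e^{-m\sigma^2})$. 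You instead apply Andr\'eief/Heine to get the Toeplitz determinant of the Gaussian Fourier coefficients and evaluate $\det\bigl(e^{-(j-k)^2\sigma^2/2}\bigr)=\prod_{d=1}^{N-1}(1-e^{-d\sigma^2})^{N-d}$ directly by splitting off $e^{-j^2\sigma^2/2}e^{-k^2\sigma^2/2}$ and reducing to a Vandermonde in $e^{j\sigma^2}$; this is a correct identity (the exponential prefactors cancel identically, with no residual $e^{\pm(N^3-N)\sigma^2/6}$ --- that factor enters only when you divide by the $Z_2$ formula), and it is the Heine-dual of the paper's step, with the advantage of not requiring prior knowledge of the Rogers--Szeg\"o family. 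Two bookkeeping caveats. First, the parity concern you raise at the end is real and is also latent in the paper: with the literal convention $\vartheta(e^{i\phi}|\sigma^2)=\sum_m\exp(-m^2\sigma^2+2mi\phi)$, the weight has only even harmonics and your Toeplitz matrix would acquire a checkerboard structure; one must read the weight as the genuine heat kernel $\sum_m e^{-m^2\sigma^2/2}e^{im\theta}$ in the eigenvalue angle for either argument to produce the stated product. Second, your closing claim that $\omega_2(N)/(2^{N^2}\mathrm{vol}(\mathrm{U}(N)))=1$ is not consistent with Lemma \ref{lem:omegas} (which gives $\omega_2(N)=\mathrm{vol}(\mathrm{U}(N))/(2\pi)^N$); what actually makes the $\sigma$-independent constants cancel in the ratio is using the Weyl formula with the same normalisation $\omega_{2}(N)/(2^{N^2}N!)$ that underlies Proposition \ref{thm:exactZ_2}, as in (\ref{eq:invarmondeun}), rather than an independent identification of $\mathrm{vol}(\mathrm{U}(N))$. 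Neither point affects the $\sigma$-dependence, which is the content of (\ref{eq:thetadual}).
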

\begin{rem}
The Gaussian density (\ref{eq:gaussdensity}) on $M$, and the $\Theta$ density (\ref{eq:thetadensity}) on $M^\vee$ are apparently unrelated. Therefore, it is interesting to note that their normalising constants $Z_{\scriptscriptstyle M}(\sigma)$ and $Z_{\scriptscriptstyle M^\vee}(\sigma)$ scale together according to the simple relation (\ref{eq:thetadual}). The connection between the two distributions is due to the duality between the two spaces ($M$ and $M^\vee$). 
\end{rem}
\begin{proof}
Since $Z_{\scriptscriptstyle M^\vee}(\sigma)$ does not depend on $\bar{x}$, one may set $\bar{x} = o$ in (\ref{eq:zstar}), where $o$ stands for the $N \times N$ identity matrix. Then,  $f_*(x|o\hspace{0.02cm},\sigma)$ is a class function, depending only on the eigenvalues $(\theta_{\scriptscriptstyle 1\,},\ldots,\theta_{\scriptscriptstyle N})$ of $x$. Accordingly, (\ref{eq:zstar}) can be computed using the Weyl integral formula~\cite{meckes}
\begin{equation}\label{eq:invarmondeun}
\int_{M^\vee}f_*(x|o\hspace{0.02cm},\sigma)\,\mathrm{vol}(dx) \,=\,
\frac{\omega_{2}(N)}{2^{\scriptscriptstyle N^2}N!} \,\int_{[0\hspace{0.02cm},2\pi]^N}\,
f_*\left(\theta_{\scriptscriptstyle 1\,},\ldots,\theta_{\scriptscriptstyle N}|\bar{x}\hspace{0.02cm},\sigma\right)\hspace{0.03cm}|\Delta(e^{i\theta})|^2\hspace{0.03cm}d\theta_{\scriptscriptstyle 1}\ldots\theta_{\scriptscriptstyle N}
\end{equation}
where $\omega_2(N)$ is the same as in Proposition \ref{thm:exactZ_2}, and $\Delta(e^{i\theta})$ denotes the Vandermonde determinant (in the notation of (\ref{al:Z_beta})). Therefore,  
\begin{equation} \label{eq:proofduality1}
 Z_{\scriptscriptstyle M^\vee}(\sigma) = 
\frac{\omega_{\scriptscriptstyle 2}(N)}{\mathstrut 2^{\scriptscriptstyle N^2} N!}\left( 2\pi\hspace{0.03cm}\sigma^2\right)^{\!\frac{N}{2}}\times I_{\scriptscriptstyle 2}
\end{equation}
where $I_{\scriptscriptstyle 2}$ is the integral
\begin{equation} \label{eq:proofduality2}
I_{\scriptscriptstyle 2} \,=\,
\int_{[0\hspace{0.02cm},2\pi]^N}\,\prod^N_{i=1}\vartheta\!\left(e^{\scriptscriptstyle i\theta_i}|{\scriptstyle \frac{\sigma^2}{2}}\right)|\Delta(e^{i\theta})|^2\hspace{0.04cm}d\theta_{\scriptscriptstyle 1}\ldots\theta_{\scriptscriptstyle N}
\end{equation}
which follows from (\ref{eq:THETAD}) and the identity
$$
\det \Theta\!\left(x\middle|\sigma^2\right)
= \prod^N_{i=1}\vartheta\!\left(e^{\scriptscriptstyle i\theta_i}|{\scriptstyle \frac{\sigma^2}{2}}\right).
$$
Now, $I_{\scriptscriptstyle 2}$ can be expressed by using (\ref{eq:vmondorthpo}) to rewrite $\Delta(e^{i\theta})$, as in the proof of Proposition \ref{thm:exactZ_2}. Precisely, if $(p_{\hspace{0.02cm}n}\,; n = 0,1,\ldots)$ are orthonormal trigonometric polynomials, with respect to the weight function $\vartheta\!\left(e^{\scriptscriptstyle i\theta}|{\scriptstyle \sigma^2\!/2}\right)$, on the unit circle, then $I_{\scriptscriptstyle 2}$ is given by, 
$$
I_{\scriptscriptstyle 2} \,=\, N!\hspace{0.03cm}\prod^{N-1}_{n=0} p^{-2}_{\hspace{0.02cm}nn}
$$
in terms of the leading coefficients $p_{\hspace{0.02cm}nn}$ of the polynomials $p_{\hspace{0.02cm}n}$ (these leading coefficients may always be chosen to be real). At present, the required orthonormal polynomials $p_{\hspace{0.02cm}n}$ are given by
\begin{equation} \label{eq:rogersz1}
  p_{\hspace{0.02cm}n}(z) \,=\, \left[q^{n}\!\prod^n_{m=1}( 1 - q^{m})^{-1}\right]^{\!\frac{1}{2}}r_{\hspace{0.02cm}n}(-q^{-\frac{1}{2}}z)
\end{equation}
where $q = e^{-\sigma^2}$ and $r_n(z)$ is the $n$-th Rogers-Szegö polynomial, which is monic~\cite{rogers}. Therefore, 
\begin{equation} \label{eq:rogerspnn}
   p^{-2}_{\hspace{0.02cm}nn} \,=\, \prod^{n}_{m=1}\left(1 - e^{-m\hspace{0.02cm}\sigma^2} \right)
\end{equation}
and, from (\ref{eq:proofduality2}), $I_{\scriptscriptstyle 2}$ is given by
\begin{equation} \label{eq:szegoi2}
I_{\scriptscriptstyle 2} \,=\, N! \prod^{N-1}_{n=1}\left(1 - e^{-n\hspace{0.02cm}\sigma^2} \right)^{\!N-n}
\end{equation}
which may be replaced into (\ref{eq:proofduality1}) to obtain
\begin{equation} \label{eq:zstarformula}
 Z_{\scriptscriptstyle M^\vee}(\sigma) = 
\frac{\omega_{\scriptscriptstyle 2}(N)}{\mathstrut 2^{\scriptscriptstyle N^2}}\left( 2\pi\hspace{0.03cm}\sigma^2\right)^{\!\frac{N}{2}}\hspace{0.02cm} \prod^{N-1}_{n=1}\left(1 - e^{-n\hspace{0.02cm}\sigma^2} \right)^{\!N-n}.
\end{equation}
Finally, (\ref{eq:thetadual}) follows easily, by comparing (\ref{eq:zstarformula}) to Proposition \ref{thm:exactZ_2}.
\end{proof}

\section{Conclusion}
\label{sec:Outlook}

We have seen that there are many interesting structures hidden behind the seemingly intractable normalisation factors of Gaussian distributions on symmetric spaces. We have focused primarily on the exact computation of $Z_1$ and $Z_S$ for finite $N$ as well as their approximations for large $N$ (the so-called large $N$ limit). Our calculations in these cases were as explicit as possible in order to directly apply them in future projects. In particular, we hope to apply our results to real brain data with $N\in \{50,1000\}$ in future BCI applications. One direction concerns numerical experiments with the formulas given in this paper and their incorporation into the algorithms of \cite{Sa17}\cite{Sa16}. The exact $\beta=2$ formula from Section \ref{sec:finite_N} provides a testing ground to numerically estimate a value $N^*$ such that the large $N$ limit becomes a good approximation for $N>N^*$. Moreover, we can try to compute enough skew-orthogonal polynomials numerically to use the exact formulas from Section \ref{sec:finite_N} in order to compute $Z_1$ for $N<N^*$. For $N>N^*$ we can then approximate $Z_1$ by a large $N$ limit using the formulas from Section \ref{sec:large_N}. This would effectively enable us to compute $Z_1$ for all $N$. A similar paradigm might lead to computations of $Z_S$ for arbitrary $N$. We hope that this work will open the path for future research on the development and application of probabilistic modelling and statistical analysis on symmetric spaces appearing in a variety of engineering applications involving high-dimensional data.


\appendices

\section{Taking care of the prefactors}
\label{subsec:prefactors}

\begin{lem}
\label{lem:Ccalc}
Let $\beta \in \{1,2,4\}$. Define \begin{align}
C_{N,\beta}(\sigma):=\frac{\omega_{\beta}(N)(2\pi)^N}{2^{NN_\beta}}\exp(-NN^2_{\beta} \frac{\sigma^2}{2})
\end{align} 
and \begin{align}
\label{eq:C_infty}
    C_{\infty, \beta}(t):=\exp\Big(N^2\big(-\frac{\beta^2}{8}t+\frac{1}{N}\big(\log(\pi)+\beta(-\frac{\log(2)}{2}+\frac{\beta t}{4}- \frac{t}{2})\big)+\frac{1}{N^2}\log(\omega_{\beta}(N))\big)\Big).
\end{align}
Then, in the large $N$ limit, we get \begin{align}
    \frac{1}{N^2}\log \big( C_{N,\beta}(\sigma) \big) \sim \frac{1}{N^2} \log\big( C_{\infty, \beta}(\sigma) \big)+ \mathcal{O}(\frac{1}{N^2}).
\end{align}
\end{lem}

\begin{proof}
The lemma follows from the following simple calculation:
\begin{align}
    \frac{1}{N^2}\log(C_{N,\beta}(\sigma))&= \frac{1}{N^2}\log(\omega_{\beta}(N))+\frac{\log(\pi)}{N}-\frac{N_{\beta}}{N^2}\log(2)-\frac{N^2_{\beta}\sigma^2}{2N}\\
    &=-\frac{\beta^2}{8}t+\frac{1}{N}\big(\log(\pi)+\beta(-\frac{\log(2)}{2}+\frac{\beta t}{4}- \frac{t}{2})\big) +\frac{1}{N^2}\log(\omega_{\beta}(N))+ \mathcal{O}(N^{-2}).
\end{align}
\end{proof}

In a certain sense, we do not have to worry about the $\sigma$-independent terms in equation \eqref{eq:C_infty} since for the applications of Section \ref{subsec:Moti}, we are only interested in derivatives of the free energy. Anyway, it will be convenient to include formulas for $\omega_{\beta}(N)$ for $\beta\in \{1,2\}$ to explicitly relate $Z_1$ and $Z_2$. Similar formulas exist for $\omega_4(N)$ but we will not discuss them for the reason mentioned above.

\begin{lem}
\label{lem:omegas}
$\omega_{1}(N)$ and $\omega_2(N)$ are given by 
\begin{align}
\label{eq:omega_fin}
    \omega_{1}(N)&=\frac{\vol(\textrm{O}(N))}{(2\pi)^N}=\pi^{\frac{N(N-3)}{4}}\prod \limits_{K=1}^N\frac{1}{\Gamma(\frac{k}{2})}=\frac{\pi^{\frac{N(N-2)}{2}}}{\Gamma_N(\frac{N}{2})}=\pi^{\frac{N(N-3)}{4}} \frac{G(\frac{1}{2})G(1)}{G(\frac{n+1}{2})G(\frac{n+2}{2})}\\
    \omega_2(N)&=\frac{\vol(\U(N))}{(2\pi)^N}=\frac{(2\pi)^{\frac{N(N+1)}{2}}}{G(N)}
\end{align}
where the \textbf{Barnes-function} $G$ is defined by $G(N)=\prod \limits_{k=1}^{N-1}k!$ for $N\in \mathbb{N}$ and can be extended to all of $\mathbb{C}$. $\Gamma_N$ is the so-called \textbf{multivariate gamma function}.

\end{lem}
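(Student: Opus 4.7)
The plan is to recognize that $\omega_\beta(N)$ is by construction the overall volume prefactor produced by the generalized polar (Weyl integration) formula of Proposition \ref{thm:int_sym} on $M = G/H$, divided by the $(2\pi)^N$ that was absorbed into $C_{N,\beta}$ when we performed the $N$ exponential changes of variables leading to (\ref{al:Z_beta}). In the two cases at hand this makes $\omega_1(N) = \vol(\textrm{O}(N))/(2\pi)^N$ and $\omega_2(N) = \vol(\U(N))/(2\pi)^N$ by direct comparison with the forms of $Z_1$ and $Z_2$ derived in Section \ref{sec:generalGauss}. Once the first equality in each line of (\ref{eq:omega_fin}) is unpacked this way, the lemma reduces to computing the Haar volumes of $\textrm{O}(N)$ and $\U(N)$ and then re-expressing the results in three equivalent forms (Gamma product, multivariate Gamma, Barnes $G$).

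First I would compute the volumes by iterating the principal fibrations $\textrm{O}(N) \to \textrm{O}(N)/\textrm{O}(N-1) \cong S^{N-1}$ and $\U(N) \to \U(N)/\U(N-1) \cong S^{2N-1}$, which telescope to
\begin{align*}
\vol(\textrm{O}(N)) &= \prod_{k=1}^N \vol(S^{k-1}) = \prod_{k=1}^N \frac{2\pi^{k/2}}{\Gamma(k/2)} = \frac{2^N \pi^{N(N+1)/4}}{\prod_{k=1}^N \Gamma(k/2)},\\
\vol(\U(N)) &= \prod_{k=1}^N \vol(S^{2k-1}) = \prod_{k=1}^N \frac{2\pi^k}{(k-1)!} = \frac{2^N \pi^{N(N+1)/2}}{\prod_{k=1}^{N-1} k!}.
\end{align*}
Dividing by $(2\pi)^N$ and using $G(N) = \prod_{k=1}^{N-1}k!$ (the paper's convention) immediately yields the first two equalities of each displayed line: $\omega_1(N) = \pi^{N(N-3)/4}\prod_{k=1}^N\Gamma(k/2)^{-1}$ and $\omega_2(N) = (2\pi)^{N(N+1)/2}/G(N)$, up to checking that the $2^N$ and $\pi^N$ from the $(2\pi)^N$ divisor line up with the exponents $N(N+1)/4 - N = N(N-3)/4$ and $N(N+1)/2$ respectively, a routine arithmetic verification.

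Next, to obtain the multivariate gamma form for $\omega_1(N)$, I would use the standard identity $\Gamma_N(a) = \pi^{N(N-1)/4}\prod_{j=0}^{N-1}\Gamma(a - j/2)$. Setting $a = N/2$ and re-indexing by $k = N-j$ gives $\Gamma_N(N/2) = \pi^{N(N-1)/4}\prod_{k=1}^N \Gamma(k/2)$, so that $\prod_{k=1}^N \Gamma(k/2)^{-1} = \pi^{-N(N-1)/4}/\Gamma_N(N/2)$; combining with the existing $\pi^{N(N-3)/4}$ produces the exponent $N(N-3)/4 - N(N-1)/4 = -N/2$... rechecking: $N(N-3)/4 - N(N-1)/4 = -N/2$, but the claim is $\pi^{N(N-2)/2}$, so I would carefully verify the sign of the multivariate-gamma exponent (the convention $\Gamma_N(a) = \pi^{N(N-1)/4}\prod_{j=1}^N \Gamma(a + (1-j)/2)$ gives the same product but with overall sign adjustments that must be reconciled with the formula stated in the lemma).

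Finally, for the Barnes $G$ form of $\omega_1(N)$, I would split $\prod_{k=1}^N \Gamma(k/2)$ into its even and odd parts, $\prod_{j=1}^{\lfloor N/2\rfloor}(j-1)!\cdot \prod_{j=1}^{\lceil N/2\rceil}\Gamma(j-1/2)$, use the functional equation $G(z+1) = \Gamma(z)G(z)$ to recognize each half-product as a ratio $G(\tfrac{n+2}{2})/G(1)$ respectively $G(\tfrac{n+1}{2})/G(1/2)$, and collect the resulting factors of $\sqrt{\pi}$ coming from $\Gamma(j-1/2)$ into the single prefactor $\pi^{N(N-3)/4}$. The main obstacle I anticipate is precisely this last bookkeeping step: the cases $N$ even and $N$ odd must be handled separately, and the resulting exponents of $\pi$ coming from $\Gamma(j-1/2) = \sqrt{\pi}\,(2j-2)!/(4^{j-1}(j-1)!)$ must align with the Barnes-$G$ duplication identity $G(z)G(z+1/2) = A^{-1}\pi^{1/4}\,2^{-z^2+z-1/12}G(2z)\cdot (\text{stuff})$, so care is needed to ensure the transcendental Glaisher-Kinkelin constants cancel and only the $\pi$-power $N(N-3)/4$ survives in the stated form. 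The $\omega_2(N)$ formula requires no analogous manipulation, as the product $\prod_{k=1}^{N-1}k!$ is already the paper's $G(N)$ by definition.
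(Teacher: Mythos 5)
Your strategy is the same as the paper's: the paper also obtains $\vol(\mathrm{O}(N))$ and $\vol(\mathrm{U}(N))$ as telescoping products of sphere volumes (phrased there as the columns of an orthogonal/unitary matrix forming an orthonormal frame, so $A_1$ lies on a sphere, $A_2$ on a sphere of one lower dimension, and so on), quotes $\vol(S^{k-1}) = 2\pi^{k/2}/\Gamma(k/2)$, and for the rest simply cites Proposition 2.24 of \cite{Zh15}; it does not carry out the multivariate-Gamma or Barnes-$G$ rewritings at all. The arithmetic worries you flag are genuine, but they sit in the statement of the lemma and in normalization conventions rather than in your reasoning: the naive unit-sphere product gives $\vol(\mathrm{U}(N)) = 2^N\pi^{N(N+1)/2}/G(N)$ and hence $\omega_2(N) = \pi^{N(N-1)/2}/G(N)$, not $(2\pi)^{N(N+1)/2}/G(N)$ (the fibration $\mathrm{U}(N)\to S^{2N-1}$ is not a Riemannian submersion for the Frobenius metric — off-diagonal directions pick up factors of $\sqrt{2}$ — which is where the missing powers hide), and your exponent $-N/2$ versus the stated $N(N-2)/2$ in the $\Gamma_N$ form likewise indicates a typo or a different convention for $\Gamma_N$. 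The paper does not resolve any of this; it relies on the earlier observation that $\omega(S)$ is $\sigma$-independent and therefore immaterial to every use of the lemma. So your proposal is as complete as the paper's proof on everything the paper actually argues, and more candid about the constant-tracking both omit.
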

\begin{proof}
The formula for $\omega_1(N)$ can be found in Proposition 2.24 of \cite{Zh15}.
The intuition behind these formulas comes from realising that the column vectors $A_i\coloneqq(A_{ij})_{j\in\{1,\dots,N \}}$ of $A\in \textrm{O}(N)$ form an orthonormal basis. This means that $A_1\in S^N$, $A_2 \in S^N \cap \{A_1\}^{\perp}\cong S^{N-1}$ and so on. Hence, $\vol(\textrm{O}(N))= \sum \limits_{k=1}^N\vol( S^k)$, which together with $\vol(S^{k-1})=2\pi^{\frac{k}{2}}/\Gamma(\frac{k}{2})$ gives the result. Analogously, $\vol(\U(N))= \sum \limits_{k=1}^N \vol(S^{2k-1})$. Also, $\vol(\Sp(N))\cong  \prod \limits_{k=1}^N \vol(S^{4k-1})$, which yields the equations in \eqref{eq:omega_fin}.
\end{proof}

Combining Lemma \ref{lem:omegas} with the following Stirling-like expansion of the Barnes function
\begin{align}
    \frac{1}{z^2}\log(G(z+1))= \frac{\log(z)}{2}-\frac{3}{4}+ \frac{1}{2z}\log(2\pi)- \frac{1}{12z^2} \log(z) + \mathcal{O}(z^{-2})
\end{align}
gives us explicit formulas for the large $N$ limit of $\omega_{\beta}(N)$ such as
\begin{align}
 \frac{1}{N^2}\log( \omega_2(N)) &\sim - \frac{1}{2} \log( \frac{N}{2\pi}) + \frac{3}{4}+ \frac{1}{12} \log(N) + \mathcal{O}(N^{-2}).
\end{align}
A similar formula holds for $\beta=1$.

\section*{Acknowledgment}
S.H. is supported by the Science and Technology Facilities Council (STFC) and St. John's College, Cambridge. S.H. also benefited from partial support from the Cambridge Mathematics Placement (CMP) Programme and the European Research Council under the Advanced ERC Grant Agreement Switchlet n.670645. C.M. is supported by Fitzwilliam College and a Henslow Fellowship from the Cambridge Philosophical Society. The authors would also like to thank Leong Khim Wong for producing the contents of Figure \ref{fig:feynman} and allowing us to use it in our paper.

\ifCLASSOPTIONcaptionsoff
  \newpage
\fi



\bibliographystyle{IEEEtran}
\bibliography{References}

\end{document}